\def\subsection{\@startsection{subsection}{2}%
  \z@{.5\linespacing\@plus.7\linespacing}{.1\linespacing}%
  {\normalfont\bfseries}}
\declaretheorem[name=Theorem,numberwithin=section]{thm}
\declaretheorem[name=Remark,style=remark,sibling=thm]{rem}
\declaretheorem[name=Lemma,sibling=thm]{lemma}
\declaretheorem[name=Proposition,sibling=thm]{prop}
\declaretheorem[name=Conjecture,sibling=thm]{conj}
\declaretheorem[name=Example,style=remark,sibling=thm]{example}
\numberwithin{equation}{section}
\crefname{lemma}{Lemma}{Lemmata}
\crefname{prop}{Proposition}{Propositions}
\crefname{thm}{Theorem}{Theorems}
\crefname{cor}{Corollary}{Corollaries}
\crefname{conj}{Conjecture}{Conjectures}
\crefname{defn}{Definition}{Definitions}
\crefname{example}{Example}{Examples}
\crefname{rem}{Remark}{Remarks}
\crefname{ass}{Assumption}{Assumptions}
\crefname{not}{Notation}{Notation}
\crefname{section}{Section}{Sections}
\newcommand{\cn}{\colon}
\newcommand{\R}{\mathbb{R}}
\renewcommand{\S}{\mathbb{S}}
\renewcommand{\H}{\mathbb{H}}
\newcommand{\Lie}{\mathcal{L}}
\newcommand{\inpr}[2]{\left\langle #1,#2 \right\rangle}
\newcommand{\abs}[1]{\left\lvert{#1}\right\rvert}
\DeclareMathOperator{\Tr}{Tr}
\DeclareMathOperator{\dive}{div}
\DeclareMathOperator{\Ric}{Ric}
\DeclareMathOperator{\Rm}{Rm}
\DeclareMathOperator{\opRm}{\mathcal{R}m}
\DeclareMathOperator{\Sc}{R}
\DeclareMathOperator{\Ein}{E}
\DeclareMathOperator{\opEin}{\mathcal{E}}
\DeclareMathOperator{\Grav}{G}
\DeclareMathOperator{\W}{\mathcal{W}}
\DeclareMathOperator{\adj}{adj}
\DeclareMathOperator{\Sym}{Sym}
\newcommand{\eq}[1]{\begin{equation}\begin{alignedat}{2} #1 \end{alignedat}\end{equation}}
\newcommand{\br}[1]{\left(#1\right)}
\protected\def\ignorethis#1\endignorethis{}
\let\endignorethis\relax
\author[P. Bryan]{Paul Bryan}
\address{Department of Mathematics, Macquarie University
NSW 2109, Australia}
\email{\href{mailto:paul.bryan@uq.edu.au}{paul.bryan@uq.edu.au}}
\urladdr{\href{http://pabryan.github.io}{http://pabryan.github.io/}}
\author[M.N. Ivaki]{Mohammad N. Ivaki}
\address{Department of Mathematics, University of Toronto, Ontario,
M5S 2E4, Canada}
\email{\href{mailto:m.ivaki@utoronto.ca}{m.ivaki@utoronto.ca}}
\author[J. Scheuer]{Julian Scheuer}
\address{Department of Mathematics, Columbia University
New York, NY 10027, USA}
\email{\href{mailto:jss2291@columbia.edu}{jss2291@columbia.edu}}
\urladdr{\href{https://home.mathematik.uni-freiburg.de/scheuer/}{https://home.mathematik.uni-freiburg.de/scheuer/}}
\DeclareMathOperator{\Ob}{O}
\DeclareMathOperator{\T}{T}
\DeclareMathOperator{\Dv}{D}
\DeclareMathOperator{\xcf}{\sigma}
\DeclareMathOperator{\dtxcf}{\xcf_{\operatorname{DT}}}
\DeclareMathOperator{\dtrf}{\Ric_{\operatorname{DT}}}
\begin{document}

\title[Negatively Curved Three Manifolds]{Negatively Curved Three-Manifolds, Hyperbolic Metrics, Isometric Embeddings In Minkowski Space And The Cross Curvature Flow}

\date{\today}

\subjclass[2010]{58J35, 35K10, 58B20}
\keywords{Negative curvature, embedding, Minkowski, space-like}

\begin{abstract}
This short note is a mostly expository article examining negatively curved three-manifolds. We look at some rigidity properties related to isometric embeddings into Minkowski space. We also review the Cross Curvature Flow (XCF) as a tool to study the space of negatively curved metrics on hyperbolic three-manifolds, the largest and least understood class of model geometries in Thurston's Geometrisation. The relationship between integrability and embedability yields interesting insights, and we show that solutions with fixed Einstein volume are precisely the integrable solutions, answering a question posed by Chow and Hamilton when they introduced the XCF.
\end{abstract}

\maketitle

\section{Introduction}
\label{sec:intro}

In the early 1980's, Thurston announced the Geometrisation Conjecture \cite{MR648524}. At around the same time, Hamilton introduced the Ricci flow \cite{Hamilton:/1982}. The Geometrisation Conjecture claimed that closed three-manifolds could be decomposed into pieces modelled on geometric structures of eight possible types, while the Ricci flow deformed metrics by their Ricci curvature. At that time, Thurston classified the possible geometric structures into the eight types and proved the Geometrisation Conjecture for Haken manifolds, while Hamilton obtained convergence to a constant sectional curvature metric provided the initial metric has positive Ricci curvature.

Needless to say, both these seminal works sparked off tremendous developments continuing to this day. A crowning achievement was Perelman's resolution of the Geometrisation Conjecture using the Ricci flow with surgery \cite{2003math......7245P,2003math......3109P,2002math.....11159P}. The legend goes that at the urging of Yau, Hamilton initiated a program to use the Ricci flow to prove the Geometrisation Conjecture. Roughly speaking, the Ricci flow  tends to smooth out irregularities in curvature, but singularities may occur. Hamilton outlined an approach to cut out the singularities with analytically controlled topological surgeries and continue the flow. Perelman proved that this process does indeed work, with only finitely many such surgeries required after which the remaining pieces converge to one of Thurston's eight model geometries. Tracing the process back provides the necessary decomposition to resolve the Geometrisation Conjecture.

Of the eight geometries, essentially only the hyperbolic geometries are not fully understood. All the other seven cases may be enumerated in a similar fashion to the uniformisation of surfaces. Around the time Perelman was completing his work on the Ricci flow, Chow and Hamilton introduced a new flow, the Cross Curvature Flow (XCF) \cite{MR2055396}. This flow deforms initially negatively curved metrics (also positive but we won't focus on that case) by a fully nonlinear parabolic equation. The Ricci flow is to the heat equation as the XCF is to a Monge-Amper\'e equation. The aim of the XCF is to deform negatively curved metrics to hyperbolic metrics thus illuminating the structure of the eighth and least understood model geometry, namely the hyperbolic geometries.

Here we will examine the status of this program, describing the known results to date and drawing together some observations made in the literature. We will touch on some of the difficulties faced in this program with the hope of reinvigorating the study of the XCF so that fresh insights may yield new results for this fascinating flow.

In \cite{MR2055396}, evidence that the XCF deforms arbitrary negatively curved metrics (after normalisation) to a hyperbolic metric was given. Further evidence was provided in \cite{MR2448593} showing that hyperbolics metrics are asymptotically stable. See \Cref{subsec:xcf_hyperbolic_convergence} below for details. The original short time existence proof based on the Nash-Moser implicit function theorem in \cite{MR2055396} was not complete and a complete proof based on the DeTurck trick was given in \cite{MR2207496}. Short time existence and uniqueness is discussed in \Cref{subsec:xcf_existence_uniqueness}. In the case the universal cover embeds isometrically into Minkowski space, the Gauss curvature flow is equivalent to the XCF (\Cref{lem:xcf_gcf})and convergence to the hyberbolic metric follows by \cite{MR3344442}. This is discussed in \Cref{sec:embed_intg}, where it is also shown that embedding is equivalent to an integrability condition (\Cref{thm:intg_embed}). Whether a Harnack inequality holds was raised in \cite{MR2055396}. The Harnack inequality for integrable solutions, as well as rigidity of solitons is discussed in \Cref{subsec:xcf_harnack_solitons}. Another question was to classify those solutions with constant Einstein volume which is answered in \Cref{subsec:xcf_volume}.

There are other results for the XCF contained in the literature that space precludes their discussion here. Briefly, long time existence and expansion to infinity of a square torus bundle is obtained in \cite{MR2222245}. Long time existence, convergence results and singularity analysis on locally homogeneous spaces of variable curvature where the XCF reduces to an ODE is examined in \cite{MR2407107,MR2653711,MR2426751} as well as the backwards behaviour in \cite{MR2601352}. On a solid torus, long time existence and curvature bounds for the XCF starting at the \(2\pi\)-metric of Gromov and Thurston is obtained in \cite{MR2602839}. Finally, uniqueness and backwards uniqueness are addressed in \cite{MR3575926,MR3544962}.

\section{Geometrisation Of Three Manifolds}
\label{sec:geometrisation}

For surfaces, the uniformisation theorem and Gauss-Bonnet theorem provide a complete picture of the topology and its relation to curvature. The three dimensional case is more complicated than the two dimensional case, but is almost completely understood thanks to Perelman's successful completion \cite{2003math......7245P,2003math......3109P,2002math.....11159P} of Hamilton's program based on the Ricci flow \cite{Hamilton:/1982} to solve the Poincar\'e and Thurston geometrisation conjectures \cite{MR648524}. The remaining piece of the puzzle is the structure of hyperbolic, closed three manifolds. We include here a brief description and refer the reader to \cite{MR705527} and \cite{MR1435975} for in depth discussions of geometrisation and \cite{MR3186136,MR2334563,MR2460872} for expositions of the Hamilton-Perelman proof. Unless explicitly stated otherwise, the results described here may be found in these references.

The geometrisation conjecture may be stated as follows:

\begin{thm}[Thurston Geometrisation]
\label{thm:geometrisation}

Every closed three manifold decomposes as a connected sum of prime manifolds, each of which may be cut along tori so that the interior of the resulting manifolds each admits a unique geometric structure of with finite volume from among a possible eight types.
\end{thm}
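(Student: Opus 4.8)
The plan is to follow the Hamilton--Perelman program, combining classical three-manifold topology with Hamilton's Ricci flow and Perelman's surgery; since a complete proof fills several monographs \cite{MR3186136,MR2334563,MR2460872}, I only sketch the architecture.

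First one reduces the problem to geometrising a controlled list of pieces. By the Kneser--Milnor theorem every closed three-manifold is a connected sum of finitely many prime manifolds, unique up to order and orientation, and the connecting spheres are exactly what the statement permits (they are absorbed by the prime decomposition together with the $\mathbb{S}^2\times\mathbb{R}$ geometry), so it suffices to geometrise each prime factor. A prime manifold is either $\mathbb{S}^2\times\mathbb{S}^1$, which is already geometric, or irreducible; for an irreducible manifold the Jaco--Shalen--Johannson decomposition produces a canonical minimal family of disjoint incompressible tori cutting it into pieces each of which is either Seifert fibered or atoroidal. Seifert fibered pieces carry one of six homogeneous geometries by the classical theory of Seifert fibrations and their Euler numbers, and $\mathrm{Sol}$ arises exactly for torus (semi-)bundles with hyperbolic monodromy; this part is essentially bookkeeping. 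The genuine content is that a closed, irreducible, atoroidal piece is hyperbolic when $\pi_1$ is infinite, and $\mathbb{S}^3$ when $\pi_1$ is finite (the elliptisation and Poincar\'e statements), and this is where the flow enters.

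Starting the normalised Ricci flow from an arbitrary metric, Perelman's no-local-collapsing theorem---proved via monotonicity of the $\mathcal{W}$-entropy and of the reduced length and reduced volume---together with the Hamilton--Ivey pinching estimate forces every finite-time singularity model to be a $\kappa$-noncollapsed ancient solution of nonnegative curvature. Classifying such $\kappa$-solutions yields the canonical neighbourhood theorem: near a singularity the manifold is modelled on a shrinking round cylinder, a capped cylinder, or a round space form. One then excises the $\mathbb{S}^2$-necks, caps them off, and continues the flow, showing that only finitely many surgeries occur on any finite time interval, so that Ricci flow with surgery exists for all positive time. As $t\to\infty$ a thick--thin decomposition holds: on the thick part the rescaled metric subconverges to a complete finite-volume hyperbolic metric whose cusp tori are incompressible, while the thin part collapses with bounded curvature and, by the collapsing theory of Cheeger--Fukaya--Gromov (in the forms due to Shioya--Yamaguchi, Morgan--Tian, Kleiner--Lott, Bamler and others), is a graph manifold, hence a union of Seifert pieces glued along tori. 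Matching these with the JSJ tori delivers the asserted geometric decomposition; finiteness of volume is built into the construction, and uniqueness of the geometric structure follows from Mostow--Prasad rigidity in the hyperbolic case and from the rigidity of the remaining homogeneous models.

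I expect the main obstacle to be controlling the geometry through surgery: the surgery scale must be chosen adaptively so that the pinching, curvature and $\kappa$-noncollapsing estimates persist through each surgery with constants that neither degenerate over the infinitely many surgeries needed for all-time existence nor permit surgeries to accumulate in finite time---this uniformity is the most delicate analytic input of the entire argument. A secondary difficulty is the collapsing analysis identifying the thin part as a graph manifold, which relies on Alexandrov-geometry and equivariant-fibration techniques essentially independent of the flow.
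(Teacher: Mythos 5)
Your sketch is correct and follows essentially the same route the paper itself points to: Kneser--Milnor prime decomposition, Thurston's classification of the eight model geometries with the JSJ/Seifert bookkeeping, and existence of the geometric decomposition via Hamilton--Perelman Ricci flow with surgery, with uniqueness from Mostow rigidity. The paper offers no proof of its own for this theorem but cites exactly these sources, so your outline (explicitly a sketch, as any honest treatment here must be) matches the intended argument.
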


A prime manifold is simply a manifold that cannot be written as a non-trivial connected sum. The decomposition into prime manifolds was given in \cite{MR0142125}. To say that \(M\) admits a geometric structure is to say that \(M\) is diffeomorphic to \(X/\Gamma\) where \(X\) is a \(G\)-manifold for some Lie group \(G\) acting transitively on \(X\) with compact stabilisers, and \(\Gamma\) is a discrete subgroup of \(G\) acting freely on \(X\). The classification into eight types of finite volume geometric structures was given by Thurston. Finally the remaining part of the theorem, that such a decomposition of prime manifolds exists was proven by Hamilton and Perelman using the Ricci flow with surgery. The eight geometries are
\[
\R^3, \S^3, \H^3, \S^2 \times \R, \H^2 \times \R, \widetilde{SL}_2(\R), \operatorname{Nil}, \operatorname{Solv}.
\]

In the case of \(\operatorname{Solv}\) these are precisely torus and Klein bottle bundles over \(S^1\) or the union of two twisted \(1\)-bundles over the torus or Klein bottle. The remaining six non-hyperbolic geometries are all Seifert Fibre bundles, completely determined by the Euler characteristic of the base space, \(\chi\) and the Euler number of the bundle, \(e\).

The only remaining case then is the hyperbolic case and this has yet has no classification. As a consequence of geometrisation, we have

\begin{thm}[Hyperbolisation]
\label{thm:hyperbolisation}

A closed three-manifold admitting a metric of negative sectional curvature also admits a hyperbolic metric. That is, a metric of constant negative sectional curvature.
\end{thm}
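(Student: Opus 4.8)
The plan is to derive this as a corollary of \Cref{thm:geometrisation}. Let $M$ be a closed three-manifold carrying a Riemannian metric $g$ of negative sectional curvature. First I would collect the topological consequences of negative curvature. By the Cartan--Hadamard theorem the Riemannian universal cover $(\widetilde M,\widetilde g)$ is diffeomorphic to $\R^3$, so $M$ is aspherical; in particular $\pi_2(M)=0$ and $\pi_1(M)$ is infinite and torsion-free. By Preissmann's theorem, every nontrivial abelian subgroup of $\pi_1(M)$ is infinite cyclic; in particular $\pi_1(M)$ has no subgroup isomorphic to $\Z\oplus\Z$.

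Next I would use these facts to trivialise the decomposition in \Cref{thm:geometrisation}. Since $\pi_2(M)=0$, the sphere theorem implies that every embedded two-sphere in $M$ bounds a ball, so $M$ is irreducible, hence prime, and the connected sum decomposition is trivial. An incompressible torus in $M$ would give an injection $\Z\oplus\Z\hookrightarrow\pi_1(M)$, which is impossible; thus $M$ is atoroidal and the torus decomposition is trivial as well. Therefore \Cref{thm:geometrisation} says that $M$ itself admits one of the eight geometric structures.

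It remains to exclude the seven non-hyperbolic models. For $\S^3$ the fundamental group is finite, and for $\S^2\times\R$ the manifold is not aspherical, so both are ruled out by Cartan--Hadamard. For each of $\R^3$, $\H^2\times\R$, $\widetilde{SL}_2(\R)$, $\operatorname{Nil}$ and $\operatorname{Solv}$, the fundamental group of any closed quotient contains a copy of $\Z\oplus\Z$: in the flat and $\operatorname{Nil}$ cases a finite-index subgroup is a lattice in $\R^3$, respectively the Heisenberg group; for $\H^2\times\R$ and $\widetilde{SL}_2(\R)$ the $\R$-direction yields a central $\Z$ that together with any other element of infinite order generates a $\Z\oplus\Z$; and for $\operatorname{Solv}$ one has the fibre torus. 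Each case contradicts Preissmann's theorem. Hence the geometric structure on $M$ must be hyperbolic, that is $M\cong\H^3/\Gamma$ for a discrete, torsion-free $\Gamma\le\operatorname{Isom}(\H^3)$, and the induced metric has constant sectional curvature $-1$.

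Within this argument the only analytic inputs are the Cartan--Hadamard and Preissmann theorems, together with the standard description of the fundamental groups of the eight model geometries; the point requiring the most care is checking that $M$ is atoroidal, so that the geometric decomposition collapses to a single piece. The genuine depth, of course, is entirely inside \Cref{thm:geometrisation}, whose proof via Ricci flow with surgery is the hard ingredient that this deduction takes for granted.
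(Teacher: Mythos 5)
Your argument is correct and follows the same route as the paper, which simply deduces hyperbolisation as a consequence of \Cref{thm:geometrisation} (citing Thurston for the Haken case and Hamilton--Perelman in general) without writing out the details. Your additional steps --- asphericity via Cartan--Hadamard, Preissmann's theorem to rule out $\Z\oplus\Z$ subgroups (hence irreducibility and atoroidality, so the decomposition is a single geometric piece), and the exclusion of the seven non-hyperbolic model geometries --- are the standard way to make that deduction precise and are sound, given that the Poincar\'e conjecture (available as part of geometrisation) is used to conclude that null-homotopic embedded spheres bound balls.
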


Thurston proved this result for atoroidal Haken manifolds, and the general result is a consequence of geometrisation as proven by Hamilton and Perelman. The route to hyperbolisation via geometrisation is quite indirect since the Ricci flow does not generally preserve negative curvature in dimensions greater than two and furthermore, singularities may form along the Ricci flow so that surgery is necessary. The decomposition obtained in \Cref{thm:geometrisation} is not unique so that the Ricci flow with surgery need not produce the hyperbolic geometry in the limit. \Cref{thm:hyperbolisation} is deduced a posteriori from the geometrisation conjecture rather than as a direct consequence the Ricci flow.

Resolving \Cref{conj:chow_hamilton} by removing the integrability condition of \Cref{thm:intg_const_curv} would give a more direct proof of hyperbolisation, while strengthening the result to the statement that arbitrary negatively curved metrics are homotopic to a hyperbolic metric. Note that by the Mostow rigidity theorem \cite{MR0236383}, hyperbolic structures are classified by fundamental group and are essentially unique. That is, if the fundamental group \(\pi_1(M_1)\) of a closed hyperbolic manifold, \((M_1, g_1)\) is isomorphic to a \(\pi_1(M_2)\) for another hyperbolic manifold \((M_2, g_2)\), then in fact \((M_1, g_1)\) and \((M_2, g_2)\) are isometric. Equivalently, any homotopy equivalence of hyperbolic manifolds may in fact be homotopied to an isometry. Then \Cref{conj:chow_hamilton} would show that the space of negatively curved metrics on a hyperbolic three-manifold is contractible.

\section{Embeddability and hyperbolic metrics}
\label{sec:embed_intg}

Let \((M, g)\) be a compact, Riemannian manifold with strictly negative curvature. Let \(\pi\colon (\tilde{M}, \tilde{g}) \to (M, g)\) be the Riemannian universal cover so that \(\pi : \tilde{M} \to M\) is a covering map with \(\tilde{M}\) simply connected and \(\tilde{g} = \pi^{\ast} g\). Let \(G\) denote the deck transformation group of the cover and observe that \(\tilde{g}\) is invariant under \(G\). That is, \(G \leq \text{Diff}(\tilde{M})\) is a group of diffeomorphisms of \(\tilde{M}\) and \(\varphi^{\ast} \tilde{g} = \tilde{g}\) for all \(\varphi \in G\) so that \(G\) acts by isometry on \((\tilde{M}, \tilde{g})\). Then \(\tilde{g}\) induces a metric \(\bar{g}\) on the quotient \(\tilde{M}/G\) such that
\[
(\tilde{M}/G, \bar{g}) \underset{\simeq}{\to} (M, g)
\]
is an isometry and the quotient map \(\tilde{M} \to \tilde{M}/G\) is just \(\pi\) under this identification. Then \((\tilde{M}/G, \bar{g})\) is a compact Riemannian quotient and we say \((\tilde{M}, \tilde{g})\) is a co-compact Riemannian manifold.

Now, since \((M, g)\) has strictly negative sectional curvature, so does \((\tilde{M}, \tilde{g})\) hence by the Cartan-Hadamard theorem, \(\tilde{M} \simeq \R^3\) is diffeomorphic to \(\R^3\) via the exponential map. In particular we may equip \(\tilde{M}\) with the hyperbolic metric, \(\tilde{g}_{\H}\) of constant, negative sectional curvature equal to \(-1\). Let us write \(G_{\H}\) for the isometry group of \((\tilde{M}, \tilde{g}_{\H})\).

On \(\tilde{M}\), there is a simple, smooth homotopy from \(\tilde{g}\) to \(\tilde{g}_{\H}\):
\[
\tilde{h}(t) = t\tilde{g} + (1-t)\tilde{g}_{\H}, \quad t \in [0, 1].
\]
This gives rise to the following simple lemma:

\begin{lemma}
\label{lem:const_neg}

Let \((M, g)\) be a compact manifold of strictly negative sectional curvature. Then the following statements are equivalent:
\begin{enumerate}[(i)]
\item \label{enum:neg_met} \(M\) admits a metric of constant, negative sectional curvature.
\item \label{enum:deck_met} \(\tilde{g}_{\H}\) is invariant under \(G\).
\item \label{enum:subgroup} G is a subgroup of \(G_{\H}\).
\item \label{enum:homo_met} \(g\) is smoothly homotopic to a metric of constant, negative sectional curvature.
\item \label{enum:homo_deck} Every \(G\)-invariant metric \(\tilde{g}\) on \(\tilde{M}\) is smoothly homotopic to \(\tilde{g}_{\H}\) via a smooth \(G\)-invariant homotopy.
\end{enumerate}
\end{lemma}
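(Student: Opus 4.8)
The plan is to deduce the equivalence from three elementary ingredients: the covering projection $\pi\colon(\tilde M,\tilde g)\to(M,g)$ is a local isometry, so that $G$-invariant metrics on $\tilde M$ correspond bijectively to metrics on $M$ with matching sectional curvatures; the complete simply connected space form of constant curvature $-1$ is unique up to isometry; and the Riemannian metrics on a fixed manifold form a convex cone, so that a linear interpolation $t\mapsto th_1+(1-t)h_0$ between two metrics is again a smooth path of metrics, and is $G$-invariant as soon as $h_0$ and $h_1$ are. With these in hand the proof is a short cycle of implications.

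The equivalence (ii) $\Leftrightarrow$ (iii) is a reformulation of definitions: $\varphi^{\ast}\tilde g_{\H}=\tilde g_{\H}$ for all $\varphi\in G$ says precisely that every element of $G$ is an isometry of $(\tilde M,\tilde g_{\H})$, i.e.\ $G\leq G_{\H}$. For (ii) $\Rightarrow$ (i), a $G$-invariant metric descends along $\pi$, so $\tilde g_{\H}$ induces a metric on $\tilde M/G\cong M$ which, $\pi$ being a local isometry, again has constant sectional curvature $-1$. Conversely, for (i) $\Rightarrow$ (ii), let $g_0$ be a metric of constant negative curvature on $M$, rescaled so its curvature equals $-1$, and set $\tilde g_0=\pi^{\ast}g_0$. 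Then $\tilde g_0$ is a $G$-invariant metric on $\tilde M$, complete because it covers the compact manifold $(M,g_0)$, and of constant curvature $-1$; since $\tilde M$ is simply connected, $(\tilde M,\tilde g_0)$ is isometric to hyperbolic space $\H^3$, hence to $(\tilde M,\tilde g_{\H})$. Identifying $\tilde g_{\H}$ with this $G$-invariant representative of the hyperbolic isometry class yields the $G$-invariance of $\tilde g_{\H}$.

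The remaining equivalences are soft. The implication (iv) $\Rightarrow$ (i) is immediate, the endpoint of the homotopy being by definition a metric of constant negative curvature on $M$; and (i) $\Rightarrow$ (iv) follows from the interpolation $t\mapsto tg_0+(1-t)g$ inside the cone of metrics on $M$. For (ii) $\Rightarrow$ (v): given any $G$-invariant metric $h$ on $\tilde M$, the family $th+(1-t)\tilde g_{\H}$ is a smooth path of Riemannian metrics by convexity, each member $G$-invariant because a sum of $G$-invariant tensors is $G$-invariant, so it is the required smooth $G$-invariant homotopy between $\tilde g_{\H}$ and $h$. Finally, for (v) $\Rightarrow$ (ii), apply (v) to $\tilde g$ itself: then $\tilde g_{\H}$ is an endpoint of a smooth homotopy all of whose time-slices are $G$-invariant, and since $\varphi^{\ast}$ is continuous in the metric the limiting slice $\tilde g_{\H}$ is $G$-invariant as well. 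Together these close the loops (i) $\Leftrightarrow$ (ii) $\Leftrightarrow$ (iii), (i) $\Leftrightarrow$ (iv) and (ii) $\Leftrightarrow$ (v).

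There is no genuine obstacle; this is, as advertised, a simple lemma. The one point deserving care is the passage (i) $\Rightarrow$ (ii): the hyperbolic metric $\tilde g_{\H}$ on $\tilde M$ is canonical only up to isometry, so statements (ii) and (iii) must be read as asserting, respectively, that some metric in the hyperbolic isometry class is $G$-invariant and that $G$ is contained in some conjugate of $G_{\H}$ inside $\Diff(\tilde M)$. Once that convention is fixed, everything reduces to the three ingredients named at the outset.
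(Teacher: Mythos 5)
Your proof is correct and takes essentially the same route as the paper's: descend and lift metrics through the universal cover, use convexity of the cone of metrics for the linear homotopy, and invoke uniqueness of the simply connected hyperbolic space form, merely organising the implications as spokes around (i)--(ii) rather than the paper's single cycle (i)\(\Rightarrow\)(ii)\(\Rightarrow\)(iii)\(\Rightarrow\)(iv)\(\Rightarrow\)(v)\(\Rightarrow\)(i). Your explicit caveat that \(\tilde{g}_{\H}\) is canonical only up to isometry, so that (ii) and (iii) are read with respect to a suitable representative, is a point the paper's terser (i)\(\Rightarrow\)(ii) step handles implicitly by taking \(\tilde{g}_{\H}\) to be the pullback of the constant-curvature metric.
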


\begin{proof}
(i)\(\ \Rightarrow\ \)(ii)
The pullback of a constant curvature metric under the Riemannian covering $\pi$ is the hyperbolic one, which is thus \(G\)-invariant.

(ii)\(\ \Rightarrow\ \)(iii)
Clear, since \(G_{\H}\) is the whole isometry group.

(iii)\(\ \Rightarrow\ \)(iv)
Since \(\tilde{g}\) and \(\tilde{g}_{\H}\) are invariant under G, so is \(\tilde{h}(t)\), which in turn descends to a homotopy on \(\tilde{M}/G\). This pushes forward to the desired homotopy, \(h\) on \(M\).

(iv)\(\ \Rightarrow\ \)(v)
For any given \(\tilde{g}\), the push forward \(h\) of \(\tilde{h}\) defined above is the desired homotopy.

(v)\(\ \Rightarrow\ \)(i)
Apply (v) to the pullback of \(g\) and push forward the resulting homotopy to \(M\).
\end{proof}

The question of whether the conditions of \Cref{lem:const_neg} are satisfied are not easy to check but the lemma affords us with several possible approaches to the problem. In this section we prove  \Cref{thm:intg_const_curv}, which gives a \emph{sufficient} condition for when \((M, g)\) admits a metric of constant, negative sectional curvature.

Before we can state it, let us agree on some notation and conventions. Given a metric \(g\) with Levi-Civita connection $\nabla$ on a manifold $M$, our conventions for the curvature tensor are
\[
\begin{split}
\Rm(X, Y) Z &= \nabla_X \nabla_Y Z - \nabla_Y \nabla_X Z - \nabla_{[X, Y]} Z, \\
\Rm(X, Y, Z, W) &= g(\Rm(X, Y) Z, W).
\end{split}
\]
Then the Ricci and scalar curvature are defined by
\eq{
\Ric(X, Y)&= \Tr \Rm(\cdot, X) Y,\\
		\Sc &=\Tr_{g}\Ric,
}
where $\Tr$ is the trace of an endomorphism and $\Tr_{g}$ is the trace of a bilinear form with respect to the metric $g$.

We define the Einstein tensor by
\eq{\Ein=\Ric-\frac{\Sc}{2}g.}

We may write the Ricci decomposition of the curvature tensor in three dimensions in the form
\begin{equation}
\label{eq:ricci_decomp}
\Rm = -\Ein \owedge g + \frac{\Tr \opEin}{2} g \owedge g
\end{equation}
where \(\owedge\) denotes the Kulkarni-Nomizu product. The sectional curvatures are
\[
K(X \wedge Y) = \frac{\Rm(X, Y, Y, X)}{\abs{X \wedge Y}^2}.
\]
The curvature operator \(\opRm\) is defined by
\[
\Rm(X, Y, Z, W) = \Rm(X \wedge Y, W \wedge Z) = g(\opRm(X \wedge Y), W \wedge Z).
\]

Then from the Ricci decomposition, given an orthonormal basis of eigenvectors \(E_i\) for \(\Ein\) with eigenvalues \(\lambda_i\) we have
\[
\begin{split}
g(\opRm(E_i \wedge E_j), E_p \wedge E_q) &= \left(-\Ein \owedge g + \frac{\Tr \opEin}{2} g \owedge g\right) (E_i \wedge E_j, E_p \wedge E_q) \\
&= g(\lambda_k E_i \wedge E_j, E_p \wedge E_q).
\end{split}
\]
Thus
\[
\opRm(E_i \wedge E_j) = \lambda_k E_i \wedge E_j
\]
and the eigenvalues of \(\opRm\) are precisely the eigenvalues of \(\Ein\). The sectional curvatures are then
\begin{equation}
\label{eq:sectional}
K (E_i \wedge E_j) = -\frac{g(\opRm(E_i \wedge E_j), E_j \wedge E_i)}{\abs{E_i \wedge E_j}^2} = -\lambda_k.
\end{equation}

Therefore \(\Ein\) is positive definite (respectively negatively definite) if and only if the sectional curvatures are negative (respectively positive). In the case of negative sectional curvature, \(\Ein\) is hence a metric. Writing \(\Ein(X, Y) = g(\opEin(X), Y)\), define \(\Ein^{-1}(X, Y) = g(\opEin^{-1} (X), Y)\).

Now we have the following theorem. We say that a symmetric \((0,2)\)-tensor \(T\) is \emph{Codazzi} if the covariant three-tensor \(\nabla T\) is totally symmetric.

\begin{thm}[Integrability and constant negative sectional curvature]
\label{thm:intg_const_curv}

Let \((M, g)\) be a closed Riemannian three-manifold of strictly negative sectional curvature with the integrability condition that the tensor \(\Ob = \sqrt{\det \opEin} \Ein^{-1}\) is \emph{Codazzi}. Then \(g\) is smoothly homotopic to a metric of constant, negative sectional curvature and hence in particular, \(M\) admits a metric of constant, negative sectional curvature.
\end{thm}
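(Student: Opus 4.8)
The plan is to exploit \Cref{lem:const_neg}, which reduces the statement to producing \emph{any} metric of constant negative sectional curvature on $M$, or equivalently showing $g$ is smoothly homotopic to one. The Codazzi hypothesis on $\Ob = \sqrt{\det\opEin}\,\Ein^{-1}$ is precisely the integrability condition of the Gauss--Codazzi equations for a hypersurface, so the strategy is to realize the universal cover $(\tilde M,\tilde g)$ as a space-like hypersurface in Minkowski space $\R^{3,1}$ and then read off constant curvature from a rigidity statement about such embeddings. First I would recall that on a space-like hypersurface in $\R^{3,1}$ with (future-directed) unit normal, the Gauss equation reads $\Rm = -\mathrm{II}\owedge\mathrm{II}$ (with the sign dictated by the Lorentzian normal), so comparing with the Ricci decomposition \eqref{eq:ricci_decomp} in three dimensions one is led to identify the candidate second fundamental form $\mathrm{II}$ with a multiple of $\Ein$; a short linear-algebra computation with the Kulkarni--Nomizu product, using that the eigenvalues of $\opRm$ equal those of $\opEin$ as established above, shows that the shape operator must be $\opOb = \sqrt{\det\opEin}\,\opEin^{-1}$ (note $\det\opOb = \det\opEin$, consistent with the Gauss equation $\det\mathrm{II} = $ scalar curvature type term). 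Then the Codazzi equation for the hypersurface is exactly the statement that $\nabla\Ob$ is totally symmetric — our hypothesis.

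Next I would invoke the fundamental theorem of hypersurface theory in Lorentzian ambient space: on the simply connected manifold $\tilde M \simeq \R^3$, the pair $(\tilde g, \Ob)$ satisfying Gauss and Codazzi integrates to an isometric immersion $F\colon (\tilde M,\tilde g)\to\R^{3,1}$ as a space-like hypersurface with second fundamental form $\Ob$, unique up to a rigid motion of $\R^{3,1}$. The uniqueness clause is what makes the deck group $G$ act: since $\tilde g$ and $\Ob$ are $G$-invariant (the Einstein tensor is built naturally from $g$, hence $G$-invariant, and so is $\Ob$), for each $\varphi\in G$ the immersion $F\circ\varphi$ has the same first and second fundamental forms as $F$, so $F\circ\varphi = A_\varphi\circ F$ for a unique $A_\varphi\in \mathrm{O}(3,1)\ltimes\R^{3,1}$, and $\varphi\mapsto A_\varphi$ is a homomorphism. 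Thus $F$ is equivariant and the image descends to a compact hypersurface-like object; but what we really need is a curvature consequence.

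The decisive step is to show the embedded hypersurface has constant curvature. Here I would appeal to the cocompact/rigidity circle of ideas: a complete space-like hypersurface in $\R^{3,1}$ that arises as the universal cover of a closed negatively curved manifold, with the natural shape operator $\Ob$, should be (a piece of) the hyperboloid $\{x : \langle x,x\rangle = -1\}$, which carries exactly the hyperbolic metric $\tilde g_\H$. One clean route: the equivariance gives a homomorphism $G\to\mathrm{Isom}(\R^{3,1})$; composing with the hypersurface structure and using that $G$ is a cocompact lattice acting on a Cartan--Hadamard $3$-manifold, together with Mostow-type or Bieberbach-type rigidity for the resulting action, forces the hypersurface to be the standard hyperboloid, whence $\tilde g$ is itself already $\tilde g_\H$ up to scaling — in particular $\tilde g_\H$ is $G$-invariant, which is condition (ii) of \Cref{lem:const_neg}, and we are done. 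Alternatively, and perhaps more in the spirit of the paper, one shows directly that the Gauss curvature flow / the equivalence noted in \Cref{lem:xcf_gcf} applies to the embedded hypersurface and the known convergence result of \cite{MR3344442} drives it to the hyperboloid, again yielding the homotopy required in (v) of \Cref{lem:const_neg}.

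The main obstacle I anticipate is precisely the last step — upgrading ``embeds as a space-like hypersurface with shape operator $\Ob$'' to ``the hypersurface is the hyperboloid.'' The Gauss--Codazzi integrability only produces \emph{some} space-like embedding, and a priori there is a large moduli of such hypersurfaces; the negative curvature and cocompactness of $G$ must be used in an essential way (via rigidity of lattice actions on $\R^{3,1}$, or via a flow argument, or via a maximum-principle/completeness argument pinning down $\langle F,F\rangle$ to be constant). Controlling completeness of the space-like metric induced by $F$ and ensuring $F$ is proper — so that cocompactness of the quotient is genuinely available — is the technical heart, and is where I would expect to spend the bulk of the work.
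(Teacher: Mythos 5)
Your first half is sound and is essentially the paper's \Cref{thm:intg_embed}: the Codazzi hypothesis on $\Ob$, together with the Gauss equation (which $\Ob$ satisfies by the Ricci decomposition in dimension three), integrates on the simply connected Cartan--Hadamard cover to a spacelike, locally convex isometric immersion with shape operator $\sqrt{\det\opEin}\,\opEin^{-1}$, and the uniqueness clause of the fundamental theorem gives equivariance under the deck group $G$, hence co-compactness. The problem is your ``decisive step.'' The primary route you propose --- that rigidity of the cocompact action forces the image to be the standard hyperboloid, ``whence $\tilde g$ is itself already $\tilde g_{\H}$ up to scaling'' --- is false, and your own closing paragraph correctly identifies why: there is a large moduli of $G$-equivariant, convex, co-compact spacelike hypersurfaces in the future cone (for instance equivariant radial graphs over the hyperboloid), each inducing a negatively curved metric whose $\Ob$ is Codazzi but which is not of constant curvature. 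No Mostow/Bieberbach-type argument can pin the hypersurface down to the hyperboloid; indeed if it could, the theorem would assert $g$ is hyperbolic rather than merely homotopic to a hyperbolic metric, which is not what is true. So as written the main line of your proof does not close.

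Your parenthetical ``alternative'' is in fact the paper's actual proof, and it is the step that does the work: by \cite[Theorem 1.1]{MR3344442} the (rescaled) Gauss curvature flow deforms the embedded co-compact convex spacelike hypersurface smoothly to the hyperboloid, giving a smooth homotopy from $\tilde g$ to a constant negative curvature metric. What still needs to be said there --- and what the paper supplies --- is why this homotopy descends to $M$: the induced metrics evolve by the XCF (\Cref{lem:xcf_gcf}), and since $\varphi^{\ast}\tilde g_t$ solves the same flow with the same $G$-invariant initial data, uniqueness (\Cref{thm:xcf_existence_uniqueness}) forces $\tilde g_t$ to remain $G$-invariant for all $t$; equivalently one argues that the GCF preserves the $G$-equivariance you established for $F$. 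With $G$-invariance of the homotopy and of the limiting hyperbolic metric in hand, \Cref{lem:const_neg} yields both conclusions of the theorem. If you promote this flow argument from an aside to the main argument and add the invariance-by-uniqueness step, your proof matches the paper's.
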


Note that as a special case of the considerably more general Thurston Geometrisation of three manifolds, already any closed three-manifold admitting a metric of negative sectional curvature also admits a metric on constant negative sectional curvature. The famous final resolution of Thurston Geometrisation by Perelman requires considerable machinery so the more direct and simpler proof described here in this case is desirable. Moreover, the homotopy in the theorem is not a consequence of Geometrisation. See \Cref{sec:geometrisation} for a brief discussion on Geometrisation.

\Cref{thm:intg_const_curv} follows from the embeddability \Cref{thm:intg_embed} and \cite[Theorem 1.1]{MR3344442}, which says that \(N\) may be deformed to the one-sheeted hyperboloid at infinity by the Gauss curvature flow.
Before we can state and prove \Cref{thm:intg_embed}, we need some more notation concerning extrinsic geometry.

Let \(\inpr{\cdot}{\cdot}\) denote the inner-product on Minkowski space and \(D\) the corresponding Levi-Civita connection. For a spacelike immersion \(F\cn M^n \to \R^{n,1}\) with \(M\) oriented, we define the second fundamental form $A$ with respect to a timelike, unit normal field $\nu$ by
\[
D_{F_{\ast} X} F_{\ast} Y = F_{\ast} \nabla_X Y + A(X,Y)\nu.
\]
We also define the Weingarten map via
\[
A(X, Y) =  g(\W(X), Y)
\]
and write $H = \Tr_{g}A = \Tr \W$.

The basic equations of hypersurfaces (Gauss equation) in Minkowski space are
\begin{equation}
\label{eq:gauss}
\begin{split}
\Rm(X, Y) Z &= A(X, Z) \W(Y) - A(Y, Z) \W(X), \\
\Ric(X, Y) &= g(\W^2(X) - H \W(X), Y), \\
\Sc &= \|A\|^2 - H^2.
\end{split}
\end{equation}
We can also relate the eigenvalues $\lambda_{i}$ of $\opEin$ with the principal curvatures $\kappa_{i}$ of the embedding. Namely, for distinct indices \(i,j,k\) we calculate with the help of \eqref{eq:gauss},
\eq{
\lambda_k & = \kappa_k^2 - \kappa_k \sum_l \kappa_l - \frac{\sum_l \kappa_l^2 - (\sum_l \kappa_l)^2}{2} \\
&= \kappa_i \kappa_j.
}

Hence there holds
\eq{\label{lem:ein_W}
\W = \sqrt{\det \opEin} \opEin^{-1},
}
since these endomorphisms are simultaneously diagonalizable and share the same eigenvalues.

Therefore, \(\Ein > 0\) if and only if all the principal curvatures \(\kappa_i\) have the same sign (which may take to be positive by swapping \(\nu\) with \(-\nu\) if necessary). That is, \(g\) has negative sectional curvature if and only if \(\Ein > 0\) if and only if \(F(\tilde{M})\) is a locally convex, co-compact, spacelike hypersurface.

\begin{rem}
We see a strong rigidity statement that the \emph{extrinsic geometry} of embedded, spacelike hypersurfaces is completely determined by the \emph{intrinsic geometry}. The extrinsic condition of local convexity is equivalent to the intrinsic condition of negative sectional curvature.
\end{rem}

Now we may give an intrinsic characterisation of when \((\tilde{M}, \tilde{g})\) embeds isometrically into Minkowski space as precisely when \(\Ob\) is Codazzi.

\begin{thm}[Integrability implies isometric embeddability]
\label{thm:intg_embed}

Let \((M, g)\) be a closed Riemannian three-manifold of strictly negative sectional curvature. Then the tensor \(\Ob = \sqrt{\det \opEin} \Ein^{-1}\) is Codazzi if and only if the Riemannian universal cover \((\tilde{M}, \tilde{g})\) embeds isometrically into Minkowski space \(\R^{3,1}\) as a locally convex, co-compact, spacelike hypersurface.
\end{thm}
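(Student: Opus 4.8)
The plan is to realise $\Ob$ itself as the second fundamental form of the desired hypersurface and then invoke the fundamental theorem of hypersurface theory with a Lorentzian space form as ambient. Writing $\Ob(X,Y)=g(\opOb X,Y)$ with $\opOb=\sqrt{\det\opEin}\,\opEin^{-1}$, equation \eqref{lem:ein_W} shows that $\opOb$ is forced to be the Weingarten map of any spacelike immersion inducing $\tilde g$, so $\Ob$ is the unique candidate for $A$. Since $\Ob$ is built naturally and pointwise from $g$, the condition that $\Ob$ be Codazzi on $M$ is equivalent to that of its pullback $\tilde\Ob$ being Codazzi on $\tilde M$, and $\tilde\Ob$ is automatically invariant under the deck group $G$; throughout, a spacelike hypersurface in $\R^{3,1}$ carries a canonical future-pointing timelike unit normal, so $\W$ and $A$ are unambiguously defined. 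For the easy implication, suppose $F\colon\tilde M\to\R^{3,1}$ is a spacelike embedding onto a locally convex, co-compact hypersurface inducing $\tilde g$; by the computation leading to \eqref{lem:ein_W} its second fundamental form is $\tilde\Ob$, and since $\R^{3,1}$ is flat the Codazzi--Mainardi equation says $\nabla\tilde\Ob$ is symmetric in its first two arguments, hence (being symmetric in the last two) totally symmetric, so $\Ob$ is Codazzi.

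Conversely, assume $\Ob$, equivalently $\tilde\Ob$, is Codazzi. To apply the Bonnet-type existence-and-uniqueness theorem for isometric immersions into $\R^{3,1}$ with prescribed second fundamental form, two compatibility equations must hold. The \emph{Gauss equation} to be verified is the identity $\Rm(X,Y)Z=\tilde\Ob(X,Z)\,\opOb(Y)-\tilde\Ob(Y,Z)\,\opOb(X)$, with the sign for a spacelike hypersurface with timelike normal as in \eqref{eq:gauss}. In dimension three this is purely algebraic: by \eqref{eq:ricci_decomp} both sides are curvature-type tensors determined by their values on an $\opEin$-eigenbasis $\{E_i\}$, on which $\opOb$ is diagonal with eigenvalues $\kappa_k=\sqrt{\det\opEin}/\lambda_k$, so that $\kappa_i\kappa_j=\lambda_k$ and both sides return the sectional curvature $-\lambda_k$ of the plane $E_i\wedge E_j$ recorded in \eqref{eq:sectional}; this is exactly the computation preceding \eqref{lem:ein_W} run backwards. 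The \emph{Codazzi equation}, in the flat ambient, is total symmetry of $\nabla\tilde\Ob$, which is the hypothesis. The fundamental theorem then produces an isometric immersion $F\colon(\tilde M,\tilde g)\to\R^{3,1}$ with second fundamental form $\tilde\Ob$, unique up to an isometry of $\R^{3,1}$; since the $\kappa_i$ all share a sign (positive, after a choice of normal), $F$ is locally strictly convex.

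It remains to upgrade $F$ to a co-compact embedding. As $(\tilde M,\tilde g)$ covers the closed manifold $M$ it is complete, and the composition of $F$ with the orthogonal projection $\R^{3,1}\to\R^3$ onto a spacelike hyperplane is (because $F$ is spacelike) a local diffeomorphism that is metrically expanding, since $\tilde g$ is dominated by the pullback of the Euclidean metric; a metrically expanding local diffeomorphism out of a complete manifold is a covering map (lifts of paths are no longer than the paths themselves, so they extend indefinitely), hence this composition covers the simply connected $\R^3$ and is a diffeomorphism, making $F$ a proper embedding onto an entire spacelike graph. Finally, each $\varphi\in G$ is an isometry of $(\tilde M,\tilde g)$ fixing $\tilde\Ob$, so $F\circ\varphi$ is an isometric immersion with the same second fundamental form as $F$, whence by uniqueness there is a unique $\Phi_\varphi\in\mathrm{Isom}(\R^{3,1})$ with $F\circ\varphi=\Phi_\varphi\circ F$. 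Then $\varphi\mapsto\Phi_\varphi$ is an injective homomorphism and $F$ is $G$-equivariant, so $F(\tilde M)$ is a $\Phi(G)$-invariant, properly embedded, spacelike, locally convex hypersurface with $F(\tilde M)/\Phi(G)\cong\tilde M/G\cong M$ compact, i.e.\ a locally convex, co-compact, spacelike hypersurface.

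The main obstacle I anticipate is assembling this existence half with full rigour rather than any isolated computation: one needs the Bonnet theorem in its version for a Lorentzian ambient space form, with a uniqueness statement strong enough to produce the representation $\varphi\mapsto\Phi_\varphi$, and one must check carefully that completeness of $\tilde g$ genuinely forces $F$ to be an embedding through the covering-space argument, and that proper discontinuity of the $G$-action descends to $\Phi(G)$ so that the quotient is the compact $M$. The Gauss-equation check, by contrast, is elementary and special to three dimensions, being essentially the identity \eqref{lem:ein_W} already recorded in the excerpt.
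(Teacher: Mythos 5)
Your proposal is correct and follows essentially the same route as the paper: the forward direction is identical, and the converse likewise verifies the Gauss equation algebraically on an $\opEin$-eigenbasis (using $\kappa_i\kappa_j=\lambda_k$ and the Ricci decomposition) and then invokes the fundamental theorem for hypersurfaces in the flat Lorentzian ambient, with simple connectedness of $\tilde M$ giving the global solution. The only difference is that you spell out the upgrade from immersion to a proper, $G$-equivariant, co-compact embedding (via the expanding projection/covering argument and the uniqueness clause of the Bonnet theorem), details the paper's proof leaves implicit; these additions are sound.
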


\begin{proof}
First suppose \((\tilde{M}, \tilde{g})\) embeds isometrically into \(\R^{3,1}\). Since Minkowski space is flat, the second fundamental form \(A\) is Codazzi. Then \Cref{lem:ein_W} gives \(A = \Ob\), hence \(\Ob\) is Codazzi.

Conversely, suppose \(\Ob\) is Codazzi. A simple direct computation diagonalising \(\Ein\) shows $\Ob$ solves the contracted Gauss equation (second equation in \eqref{eq:gauss}). The Ricci decomposition \eqref{eq:ricci_decomp} for \(n=3\) then implies the full Gauss equation (first equation in \eqref{eq:gauss}). But the Gauss and Codazzi equations are precisely the integrability conditions required to locally integrate the over-determined system
\begin{align*}
F^{\ast} \inpr{\cdot}{\cdot} &= g \\
A(F) &= \Ob
\end{align*}
for \(F\). See for example \cite[Theorem 7]{MR1713298} and for a similar argument \cite[Chapter VI.12, p. 146 and Theorem V, p.393]{MR1013365}.

Since \((\tilde{M}, \tilde{g})\) is the universal cover of \((M, g)\) with strictly negative sectional curvature, \(\tilde{M}\) is diffeomorphic to \(\R^3\) by the Cartan-Hadamard theorem and we can globally integrate to obtain \(F\).
\end{proof}

\begin{proof}
[Proof of \Cref{thm:intg_const_curv}]

By \Cref{thm:intg_embed} we may embed \((\tilde{M}, \tilde{g})\) into Minkowski space as a locally convex, co-compact, spacelike hypersurface. By \cite[Theorem 1.1]{MR3344442}, the rescaled Gauss curvature flow deforms \((\tilde{M}, \tilde{g})\) smoothly to the hyperboloid at infinity with constant negative sectional curvature. Thus the flow provides a smooth homotopy from \((\tilde{M}, \tilde{g})\) to \((\tilde{M}, \tilde{g}_{\H})\).

According to \cite[Section 12]{MR3344442} (see also \Cref{lem:xcf_gcf}), the induced metric \(\tilde{g}_t\) on \(\tilde{M}\) evolves by the Cross Curvature Flow introduced in \cite{MR2055396} (see also \Cref{lem:xcf_gcf} below):
\[
\begin{cases}
\partial_t \tilde{g}_t &= 2\det \opEin(\tilde{g}_{t}) E^{-1}(\tilde{g}_{t}) \\
\tilde{g}_0 &= \tilde{g}.
\end{cases}
\]
At the initial time, we have \(\varphi^{\ast} \tilde{g}_0 = \tilde{g_0}\) for every \(\varphi \in G\). Then given any \(\varphi \in G\), \(\bar{g}_t = \varphi^{\ast} \tilde{g}_t\) is also a solution to the Cross Curvature Flow with the same initial condition. Hence by uniqueness of solutions (\cite{MR2055396,MR2207496}, \Cref{thm:xcf_existence_uniqueness} and \Cref{subsec:xcf_existence_uniqueness} below), \(\tilde{g}_t = \bar{g}_t = \varphi^{\ast} \tilde{g}_t\) and the flow is invariant under the action of \(G\). \Cref{lem:const_neg} then gives the result.
\end{proof}

The following conjecture suggests the integrability assumption in \Cref{thm:intg_const_curv} could be dropped.

\begin{conj}[\cite{MR2055396}]
\label{conj:chow_hamilton}

The XCF deforms arbitrary negatively curved metrics to a hyperbolic metric.
\end{conj}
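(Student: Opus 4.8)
We conclude by sketching a possible approach to \Cref{conj:chow_hamilton}, together with the obstacle we believe is decisive. The plan is to run the standard programme for a geometric evolution equation --- short-time existence, long-time existence with a priori estimates, and convergence --- adapted to the fully nonlinear, Monge--Amp\`ere-type nature of the XCF. One works with a volume-normalised version of the flow, $\partial_t g_t = 2\det\opEin(g_t)\,\opEin^{-1}(g_t) - c(t)\,g_t$ on the closed manifold $M$, with $c(t)$ chosen so that metrics of constant negative curvature are stationary (the unnormalised flow merely expands them exponentially). Short-time existence and uniqueness are already available via the DeTurck trick \cite{MR2207496} (see \Cref{subsec:xcf_existence_uniqueness}), so three tasks remain: (a) show the normalised flow preserves strict negativity of the sectional curvature, equivalently $\opEin>0$; (b) show it then exists for all time with uniformly bounded curvature and without collapsing; and (c) show it converges as $t\to\infty$ to a metric of constant negative curvature --- which is exactly the assertion of \Cref{conj:chow_hamilton} (compare \Cref{lem:const_neg}).

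First I would attack (a), which I expect to be the main obstacle. The XCF is only weakly parabolic where $\opEin>0$, so losing negative curvature is simultaneously a geometric and an analytic catastrophe, as the equation degenerates. The natural tool is Hamilton's tensor maximum principle, applied via Uhlenbeck's trick to the evolution equation for $\opEin$ (equivalently for $\Ob=\sqrt{\det\opEin}\,\opEin^{-1}$): one computes $\partial_t\opEin$, isolates the reaction term, and hopes that its restriction to the null eigenspace of the smallest eigenvalue has a favourable sign. The difficulty is that at present no such sign --- and no preserved curvature-pinching cone --- is known for the XCF; moreover this is genuinely a feature of non-integrable data, since in the embeddable (integrable) case of \Cref{thm:intg_embed} preservation of $\opEin>0$ reduces to the classical fact, used in \cite{MR3344442}, that the Gauss curvature flow preserves local convexity. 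Absent a direct argument one might instead try to show that $\Ob$ becomes asymptotically Codazzi along the flow and feed this back into \Cref{thm:intg_embed}.

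Granting (a), step (b) requires a priori curvature bounds, a non-collapsing estimate, and a uniform lower bound on the smallest eigenvalue of $\opEin$ to retain uniform parabolicity. The natural engines are a Li--Yau--Hamilton (Harnack) inequality --- whose existence Chow and Hamilton explicitly raised in \cite{MR2055396}, and which is known for integrable solutions (\Cref{subsec:xcf_harnack_solitons}) --- together with a monotone functional: the Einstein volume $\int_M\sqrt{\det\opEin}\,d\mu$ is the obvious candidate, although by the companion result of \Cref{subsec:xcf_volume} it is \emph{constant} precisely on integrable solutions, so for general data one must either extract monotonicity from it in the non-integrable regime or find a different functional, monotone along the flow, whose critical points are Einstein. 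With (a) and (b) in hand, step (c) should be comparatively tractable: Cheeger--Gromov compactness yields subsequential limits along $t_k\to\infty$, one argues that any such limit is a fixed point of the normalised flow --- equivalently an expanding soliton of the XCF --- invokes soliton rigidity (\Cref{subsec:xcf_harnack_solitons}) to conclude it has constant negative curvature, and upgrades subconvergence to smooth convergence of the whole flow via the asymptotic stability of hyperbolic metrics \cite{MR2448593}. In summary the scheme is clear, and its last two steps are broadly standard once the estimates are in place; everything hinges on step (a) --- propagating negative sectional curvature under the XCF --- and, short of a direct proof there, on finding a monotone quantity strong enough to drive the flow into the stable basin of the hyperbolic metrics.
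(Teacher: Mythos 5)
This statement is a \emph{conjecture}: the paper does not prove it, and neither do you --- your text is explicitly a programme sketch, so there is no proof to certify. Still, two points where your roadmap diverges from what the paper actually establishes are worth flagging. First, you single out step (a), preservation of negative curvature (positivity of \(\opEin\)), as the decisive obstacle and propose a tensor-maximum-principle/pinching-cone analysis. But \Cref{prop:negative_perserved} already settles this in the relevant sense: \(\det\opEin>0\), hence negative sectional curvature, persists for as long as the solution stays smooth, because \(\det\opEin\to 0\) forces \(\mathrm{H}\to\infty\) and thus a singularity. The genuinely open issue sits in your step (b): as the paper stresses in \Cref{subsec:xcf_hyperbolic_convergence}, there are no smoothing or even \(C^2\) estimates for the XCF, and it is not known that a finite-time singularity forces \(\inf_{t}\det\opEin=0\) --- the flow could conceivably become singular while \(\det\opEin\) stays bounded below. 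So the analysis you present as the main wall attacks a point that is effectively resolved, while the missing a priori estimates are the real obstruction.

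Second, your treatment of monotone quantities misreads \Cref{thm:volume_monotonicity}: the Einstein volume \(I(M_t)=\int\sqrt{\det\opEin}\,d\mu\) is monotone non-decreasing for \emph{all} solutions, not only integrable ones; it is merely \emph{constant} precisely when \(\Ob\) is Codazzi. Hence it is already a valid monotone functional for general data, but its equality case detects integrability rather than constant curvature. The functional you are really asking for is \(J(M_t)=\int\tfrac{1}{3}\Tr\opEin-(\det\opEin)^{1/3}\,d\mu\) of \Cref{thm:hyperbolicity}, which is non-increasing with \(\tfrac{d}{dt}J=0\) if and only if the metric has constant curvature; it is exactly this monotonicity, together with the integrable case (\Cref{thm:intg_const_curv}), the stability result \Cref{thm:hyperbolic_stability}, and breather/soliton rigidity, that the paper assembles as evidence for \Cref{conj:chow_hamilton}. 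Your steps (b) and (c) otherwise track that evidence, but the conjecture remains open: no argument here or in the paper supplies the long-time estimates needed to run your scheme.
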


Evidence for this conjecture includes convergence in the integrable case from \Cref{thm:intg_const_curv}, asymptotic stability of the hyperbolic metric under XCF (\cite{MR2448593} and \Cref{thm:hyperbolic_stability} below), monotonicity of an integral quantity measuring the deviation from constant curvature (\cite{MR2055396} and \Cref{thm:hyperbolicity} below).

\section{The Cross Curvature Flow}
\label{sec:xcf}
\subsection{Definition And Basic Properties Of The Flow}
\label{subsec:xcf_defn}

Let \((M, g)\) be a closed, Riemannian manifold and define
\eq{\adj\Ein(X,Y)=g(X,\adj\opEin(Y)),}
where $\adj$ is the adjugate of an endomorphism. The Cross Curvature Flow (in short, XCF) is the evolution equation,
\begin{equation}
\label{eq:xcf}
\begin{cases}
\partial_t g_t  &= 2 \adj\Ein(g_t), \\
g_0 &= g
\end{cases}
\end{equation}
where \(g_0\) has negative sectional curvature. When \(g_0\) has positive sectional curvature, we take instead \(\partial_t g = -2\adj\Ein\) though in this article we will not be concerned with this case.
\begin{rem}
Let \(\pi : \tilde{M} \to M\) be the universal cover, and \(\tilde{g}_t = \pi^{\ast} g_t\). Similarly to the proof of \Cref{thm:intg_const_curv}, we then have
\[
\partial_t \tilde{g}_t = \pi^{\ast} \partial_t \tilde{g}_t = \pi^{\ast} \adj\Ein(g_t) = \adj\Ein(\tilde{g}_t)
\]
and \(\tilde{g}_t\) solves the XCF with initial condition \(\tilde{g}_0 = \pi^{\ast} g_0\). Conversely, if \(\tilde{g}_t\) is a \(G\)-invariant solution of the XCF on \(\tilde{M}\), then there is a unique solution, \(g_t\) of the XCF on \(M\) such that \(\tilde{g}_t = \pi^{\ast} g_t\).
\end{rem}

The definition here makes sense in any dimension. If \(\opEin\) is invertible, we may also write
\[
\adj\Ein = \det \opEin \Ein^{-1} = \det\opEin g(\opEin^{-1} \cdot, \cdot).
\]
In three dimensions, \(g_t\) has negative sectional curvature if and only if \(\Ein\) is positive definite (hence \(\opEin\) is invertible). Then in an orthonormal basis of eigenvectors \(E_1, E_2, E_3\) for \(\opEin\), with eigenvalues \(\lambda_1, \lambda_2, \lambda_3\), we have for distinct indices, \(i, k, \ell\),
\[
\adj\opEin (E_i) = \det\opEin \opEin^{-1}(E_i) = \lambda_i \lambda_k \lambda_{\ell} \frac{1}{\lambda_i} E_i = \lambda_k \lambda_{\ell} E_i
\]
where \(i,k,\ell\) are distinct indices. Thus
\begin{equation}
\label{eq:cross_curvature}
\adj\Ein(E_i, E_j) = g(\lambda_k \lambda_{\ell} E_i, E_j) = \lambda_k \lambda_{\ell} \delta_{ij}.
\end{equation}
The tensor \(\adj\Ein\) is referred to as the \emph{cross curvature tensor}. The origin of the name is that the \(i\)'th eigenvalue of \(\adj\Ein\) is the ``cross term'' \(\lambda_k \lambda_{\ell}\) of the remaining eigenvalues.

There is an equivalent way to write \(\adj\Ein\) in three dimensions. In fact, both these definitions make sense in any dimension, however it is only in three dimensions that they coincide.
\begin{lemma}
\label{lem:xcf_equiv}

In three dimensions, we have
\[
\adj\Ein(X, Y) = -\frac{1}{2} \Ric_{\Ein} (X, Y) :=- \frac{1}{2} \Tr \br{Z \mapsto \Rm(\opEin(Z), X) Y}
\]
\end{lemma}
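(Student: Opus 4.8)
The plan is to verify the identity pointwise by diagonalising $\opEin$, exploiting that in three dimensions the curvature operator is entirely governed by $\opEin$. Fix $p\in M$ and choose an orthonormal basis $E_1,E_2,E_3$ of $T_pM$ consisting of eigenvectors of the symmetric operator $\opEin$, with eigenvalues $\lambda_1,\lambda_2,\lambda_3$ (such a basis exists whether or not the eigenvalues are distinct). Since both sides of the asserted identity are $(0,2)$-tensors, it suffices to compare their components in this basis. On the left, \eqref{eq:cross_curvature} already records $\adj\Ein(E_i,E_j)=\lambda_k\lambda_\ell\,\delta_{ij}$ whenever $\{i,k,\ell\}=\{1,2,3\}$.

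For the right-hand side, unwinding the trace gives
\[
\Ric_{\Ein}(E_i,E_j)=\sum_m g\bigl(\Rm(\opEin(E_m),E_i)E_j,\,E_m\bigr)=\sum_m \lambda_m\,\Rm(E_m,E_i,E_j,E_m).
\]
Next I would invoke the computation already carried out above: in this basis $\opRm(E_a\wedge E_b)=\lambda_c\,(E_a\wedge E_b)$ where $c$ is the index complementary to $\{a,b\}$, so that (using $\Rm(X,Y,Z,W)=g(\opRm(X\wedge Y),W\wedge Z)$ and the orthonormality of the basis bivectors) $\Rm(E_a,E_b,E_c,E_d)$ vanishes unless $\{a,b\}=\{c,d\}$ as sets. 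Consequently every term in the sum with $i\ne j$ vanishes, so $\Ric_{\Ein}(E_i,E_j)=0=\adj\Ein(E_i,E_j)$ off the diagonal. When $i=j$ only the two indices $m\in\{k,\ell\}$, with $\{i,k,\ell\}=\{1,2,3\}$, contribute, and \eqref{eq:sectional} together with $K(X\wedge Y)=\Rm(X,Y,Y,X)/\abs{X\wedge Y}^2$ gives $\Rm(E_k,E_i,E_i,E_k)=K(E_i\wedge E_k)=-\lambda_\ell$ and likewise $\Rm(E_\ell,E_i,E_i,E_\ell)=-\lambda_k$. Hence $\Ric_{\Ein}(E_i,E_i)=\lambda_k(-\lambda_\ell)+\lambda_\ell(-\lambda_k)=-2\lambda_k\lambda_\ell$, so $-\tfrac12\Ric_{\Ein}(E_i,E_i)=\lambda_k\lambda_\ell=\adj\Ein(E_i,E_i)$. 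Thus all components agree at $p$, and as $p$ was arbitrary the two tensors coincide.

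There is no genuine obstacle here beyond index bookkeeping: the one point to get right is that in three dimensions the Einstein eigenbasis simultaneously diagonalises $\opRm$, with the complementary eigenvalue, which collapses the double sum defining $\Ric_{\Ein}$ to exactly two surviving terms, after which one must correctly identify the ``third'' index in each summand. If one prefers an index-free route, substituting the Ricci decomposition \eqref{eq:ricci_decomp} into $\Ric_{\Ein}$ and expanding with the algebra of Kulkarni--Nomizu products yields the same conclusion, but the diagonalisation argument is shorter and fits the conventions already established above.
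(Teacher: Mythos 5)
Your proof is correct and follows essentially the same route as the paper: diagonalise $\opEin$ in an orthonormal eigenbasis, note that $\adj\Ein(E_i,E_j)=\lambda_k\lambda_\ell\delta_{ij}$, and collapse the trace defining $\Ric_{\Ein}$ to the two complementary terms using that in this basis the only nonvanishing curvature components are the sectional ones, each equal to minus the complementary Einstein eigenvalue, yielding $-2\lambda_k\lambda_\ell$ on the diagonal. The only difference is presentational: you make explicit the off-diagonal vanishing via the simultaneous diagonalisation of $\opRm$, which the paper encodes implicitly in its shorthand $\hat{\lambda}_{mi}\delta_{ij}$.
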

\begin{proof}
As noted above,
\[
\adj\Ein(E_i, E_j) = \lambda_k \lambda_{\ell} \delta_{ij}.
\]
There holds
\[
\begin{split}
\Ric_{\Ein}(E_i, E_j) &= \sum_{m=1}^3 \Rm(\opEin(E_m), E_i, E_j, E_m) \\
&= \sum_{m=1}^3 \lambda_m \Rm(E_m, E_i, E_j, E_m) \\
&= -\sum_{m=1}^3 \lambda_m \hat{\lambda}_{mi}\delta_{ij}.
\end{split}
\]
where \(\hat{\lambda}_{mi} = \lambda_k\) if \(m,i,k\) are distinct indices and is zero if \(m=i\). Now, if \(i = j\), and \(i, k, \ell\) are distinct indices, the sum is over \(m=k, \ell\) giving
\[
\Ric_{\Ein}(E_i, E_j) = -\left(\lambda_k \lambda_{\ell} + \lambda_{\ell} \lambda_k\right) = -2\lambda_k \lambda_{\ell}.
\]
Hence
\[
\frac{1}{2} \Ric_{\Ein}(E_i, E_j) =  -\lambda_k \lambda_{\ell} \delta_{ij} = -\adj\Ein(E_i, E_j).
\]
\end{proof}

\begin{rem}
In \cite[Lemma 3]{MR2055396} and \cite[Equation (3)]{MR2207496}, essentially the same result is obtained by contracting with the measure \(\mu\).
\end{rem}

In the case of integrable (and hence isometrically embeddable) solutions of XCF, we have the following observation of Ben Andrews.

\begin{lemma}[{\cite[Section 12]{MR3344442}}]
\label{lem:xcf_gcf}
The induced metric under the Gauss Curvature Flow of convex, spacelike, co-compact hypersurfaces in Minkowski space evolves by XCF.
\end{lemma}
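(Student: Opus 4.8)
The plan is to compute, in local coordinates (or better, along a local orthonormal frame adapted to the second fundamental form), the time derivative of the induced metric $\tilde g_t$ on an evolving spacelike hypersurface $F_t\colon \tilde M \to \R^{3,1}$ moving by the Gauss Curvature Flow, and then recognise the resulting tensor as $2\adj\Ein(\tilde g_t)$ via the relations already established in the excerpt between extrinsic and intrinsic data, namely $\W = \sqrt{\det\opEin}\,\opEin^{-1}$ and $\lambda_k = \kappa_i\kappa_j$ for distinct indices. Throughout I would use that $\tilde M$ is a co-compact, locally convex, spacelike hypersurface so that $\opEin > 0$ and all $\kappa_i$ have the same sign, hence $\det\opEin = (\kappa_1\kappa_2\kappa_3)^2$ and $K = \prod_i\kappa_i$ is the Gauss–Kronecker curvature. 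Note the GCF in Minkowski space must be written so as to flow the hypersurface in the timelike normal direction with speed equal to (a power of) $K$; I would state the flow as $\partial_t F = -K\,\nu$ (or with the opposite sign depending on the orientation of $\nu$ and the convexity convention used in \cite{MR3344442}), and I would simply quote the normalisation from \cite[Section 12]{MR3344442} rather than rederive it.

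**Key steps.**

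First I would recall the standard variational formula for a hypersurface moving with normal speed $f$: the induced metric satisfies $\partial_t g_{ij} = 2 f\, A_{ij}$ (with sign conventions dictated by the Minkowski setting, where the timelike normal introduces a sign relative to the Euclidean case). Second, I would substitute $f = K = \det\W$ — using here that $F(\tilde M)$ is convex so $K$ is a well-defined positive function and $\W$ is invertible — to obtain $\partial_t g_{ij} = 2 (\det\W)\, A_{ij}$. Third, I would rewrite $(\det\W)\,A_{ij} = (\det\W)\, g(\W(\cdot),\cdot)_{ij}$ in eigenbasis form: in an orthonormal frame of eigenvectors $E_i$ of $\W$ with principal curvatures $\kappa_i$, this tensor has $i$-th diagonal entry $(\kappa_1\kappa_2\kappa_3)\kappa_i = \kappa_k\kappa_\ell \cdot (\kappa_i^2)$... — more cleanly, I would instead go through $\Ein$: by \eqref{eq:cross_curvature} the cross curvature tensor has $i$-th eigenvalue $\lambda_k\lambda_\ell$, and by the identity $\lambda_k = \kappa_i\kappa_j$ this equals $(\kappa_i\kappa_\ell)(\kappa_j\kappa_\ell) = \kappa_\ell^2\,\kappa_i\kappa_j = \kappa_\ell^2\lambda_\ell$... so I must be careful. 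The clean route is: $\adj\opEin(E_i) = \lambda_k\lambda_\ell E_i$, and since $\lambda_k\lambda_\ell = (\kappa_i\kappa_j)(\kappa_i\kappa_\ell)\cdot$ — let me fix it properly in the writeup by noting $\lambda_1\lambda_2\lambda_3 = \det\opEin = (\det\W)^2 = K^2$ and $\W = \sqrt{\det\opEin}\,\opEin^{-1} = K\,\opEin^{-1}$, whence $K\,\W = K^2\opEin^{-1} = (\det\opEin)\opEin^{-1} = \adj\opEin$. Therefore $(\det\W)\,A(X,Y) = K\,g(\W X,Y) = g(\adj\opEin(X),Y) = \adj\Ein(X,Y)$, and $\partial_t g_{ij} = 2\adj\Ein$, which is exactly \eqref{eq:xcf}. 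Finally I would remark that this computation is pointwise and frame-independent, hence holds on all of $\tilde M$, and is automatically equivariant under the deck group since $F_t$ may be chosen equivariantly.

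**Main obstacle.**

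The genuine subtlety — the step I expect to require the most care — is getting the signs right in the Minkowski ambient space: the normal $\nu$ is timelike, so $\inpr{\nu}{\nu} = -1$, and this flips signs in both the Gauss equation (already reflected in \eqref{eq:gauss}, where $\Sc = \|A\|^2 - H^2$ rather than $H^2 - \|A\|^2$) and in the first variation of the induced metric under normal flow. One must also pin down the precise form and sign of the speed in the Gauss Curvature Flow of \cite{MR3344442} relative to the choice of $\nu$ that makes $\W > 0$, and confirm that the flow is contracting in the way that makes the induced metric expand by $+2\adj\Ein$ (matching the negatively-curved case of XCF) rather than by $-2\adj\Ein$. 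Rather than rederiving the first variation in the Lorentzian setting from scratch, I would either cite a standard reference for hypersurface flows in semi-Riemannian ambient spaces or perform the short computation in a local frame with the $\inpr{\nu}{\nu} = -1$ convention made explicit, and then match the normalisation to \cite[Section 12]{MR3344442}; everything else is bookkeeping with the eigenvalue identities already in hand.
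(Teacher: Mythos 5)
Your proposal is correct and follows essentially the same route as the paper: apply the first variation formula $\partial_t g = 2KA$ for normal flow with speed $K=\det\W$ (with the sign coming out right because the normal is timelike), then use $\W=\sqrt{\det\opEin}\,\opEin^{-1}$ and $K^2=\det\opEin$ to identify $KA$ with $\adj\Ein$. Your extra care about the Lorentzian sign conventions and the equivariance remark are fine but do not change the argument, which is exactly the paper's computation.
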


\begin{proof}
The Gauss Curvature Flow of hypersurfaces in Minkowksi space is the evolution equation
\[
\partial_t F = K\nu
\]
where \(K = \det \W\) is the Gauss curvature. Under this equation the metric evolves by
\[
\partial_t g = 2KA.
\]
\Cref{lem:ein_W} gives \(A = \sqrt{\det \opEin} \Ein^{-1}\) and
\[
K = \det \W = \det (\sqrt{\det \opEin} \opEin^{-1}) = (\det \opEin)^{3/2} (\det \opEin)^{-1} = \sqrt{\det \opEin}
\]
so that
\[
\partial_t g = 2KA = 2 \det \opEin \Ein^{-1} = 2 \adj \Ein.
\]
\end{proof}

\subsection{Short Time Existence And Uniqueness}
\label{subsec:xcf_existence_uniqueness}

The question of short time existence and uniqueness of solutions to geometric, parabolic equations on tensor bundles is complicated by the diffeomorphism invariance of the problem leading to degeneracies in the principal symbol. For the Ricci flow, DeTurck described a method to deal with this degeneracy by breaking the diffeomorphism invariance (i.e. fixing a gauge) to obtain an equivalent, strictly parabolic flow referred to as the DeTurck flow \cite{MR697987}. In \cite[Section 6]{MR1375255} a further simplification of DeTurck's method was given. Buckland then adapted this approach to the XCF, while also pointing out a gap in the original proof of short time existence and uniqueness for XCF \cite{MR2207496} (see \Cref{rem:xcf_rf} below).

\begin{thm}[{\cite[Lemma 4]{MR2055396}}; {\cite[Theorem 1]{MR2207496}}]
\label{thm:xcf_existence_uniqueness}
Given any initial smooth metric \(g\) of negative sectional curvature on a closed three-manifold \(N\), there exists a unique solution \(g_t\) to the XCF,
\[
\begin{cases}
\partial_t g_t &= 2\adj\Ein(g_t) \\
g_0 &= g.
\end{cases}
\]
defined on a maximal time interval \([0, T)\) for some \(T > 0\) or \(T = \infty\).
\end{thm}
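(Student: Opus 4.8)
The plan is to break the diffeomorphism invariance of \eqref{eq:xcf} with DeTurck's trick, solve the resulting strictly parabolic problem by standard parabolic theory, and then transport the solution back; uniqueness will follow by running the correspondence in reverse. \emph{Step 1 (locating the degeneracy).} Since $\adj\Ein$ is a natural tensor, $\adj\Ein(\varphi^{\ast}g)=\varphi^{\ast}\adj\Ein(g)$ for every $\varphi\in\Diff(N)$; differentiating in $\varphi$ shows the linearisation $h\mapsto 2\,D\adj\Ein_g(h)$ annihilates every Lie derivative $\Lie_V g$, so the principal symbol $\sigma_\xi$ of the right-hand side of \eqref{eq:xcf} has the gauge directions $\xi\odot V^{\flat}$ in its kernel for all $\xi,V$; thus \eqref{eq:xcf} is only weakly parabolic. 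The complementary statement to check is that on a transversal complement $\sigma_\xi$ is a definite multiple of $\abs{\xi}^{2}$, of the sign making \eqref{eq:xcf} forward-parabolic there. This uses that strictly negative sectional curvature is equivalent to $\Ein>0$ with $\opEin$ invertible, so $\adj\Ein=\det\opEin\,\Ein^{-1}$ is a smooth, fully nonlinear (Monge--Amp\`ere type) function of the two-jet of $g$; differentiating $A\mapsto\adj A$ on $3\times 3$ matrices and composing with the symbol of the linearised Einstein operator $D\Ein(h)=D\Ric(h)-\tfrac12\,D\Sc(h)\,g-\tfrac12\,\Sc\,h$ yields the claim. This is precisely the parabolicity computation of \cite[Lemma~4]{MR2055396} (compare \Cref{lem:xcf_equiv}).

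\emph{Step 2 (the DeTurck flow).} Following the streamlined DeTurck procedure of \cite[Section~6]{MR1375255}, adapted to the XCF by Buckland \cite{MR2207496}, fix a background metric $\bar g$ with Levi--Civita connection $\bar\nabla$, let $W=W(g,\bar g)$ be the DeTurck vector field with components $W^{k}=g^{ij}\br{\Gamma^{k}_{ij}-\bar\Gamma^{k}_{ij}}$, and consider
\[
\partial_t g_t = 2\,\adj\Ein(g_t)+\Lie_{W(g_t,\bar g)}\,g_t,\qquad g_0=g.
\]
At the symbol level the added Lie term contributes exactly in the gauge directions $\xi\odot(\cdot)$, and combined with Step~1 it makes the right-hand side strictly parabolic --- a fully nonlinear operator that is uniformly parabolic at metrics $C^{2}$-close to $g$, using that $\Ein>0$ is an open condition. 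Short-time existence and uniqueness for this modified flow then follow from standard theory: linearisation plus the inverse function theorem on parabolic H\"older spaces, equivalently the Krylov theory for fully nonlinear parabolic equations, or the Nash--Moser scheme as for the Ricci flow.

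\emph{Step 3 (back to the XCF, maximal interval, uniqueness).} Given the solution $g_t$ of the modified flow, solve the time-dependent ODE $\partial_t\psi_t=-W(g_t,\bar g)\circ\psi_t$, $\psi_0=\id$, which is solvable for short time since $N$ is compact and $g_t$ is smooth; then $\hat g_t:=\psi_t^{\ast}g_t$ solves \eqref{eq:xcf}, because the pullback cancels the Lie term. Openness of strict negativity of the sectional curvature keeps $\hat g_t$ admissible on a short interval, and the union of all such intervals yields a maximal $[0,T)$ with $T>0$ or $T=\infty$. For uniqueness, if $g^{1}_t$ and $g^{2}_t$ both solve \eqref{eq:xcf} with $g^{1}_0=g^{2}_0=g$, realise each as a pullback of a solution of the modified flow by solving the associated harmonic-map-type heat flow $\partial_t\varphi_t=\Delta_{g^{a}_t,\bar g}\varphi_t$, $\varphi_0=\id$ (the DeTurck--Hamilton construction); since the modified flow has a unique solution with data $g$, and the heat-flow and ODE identifications are unique, $g^{1}_t=g^{2}_t$.

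\emph{Main obstacle.} The delicate point --- and exactly where the original argument of \cite{MR2055396} had a gap that \cite{MR2207496} repairs --- is making the two-way correspondence between \eqref{eq:xcf} and the DeTurck flow rigorous for the uniqueness part: one must check that \emph{every} XCF solution arises as such a pullback and that the auxiliary harmonic-map heat flow stays regular on the relevant interval. A secondary issue is confirming that the DeTurck-modified fully nonlinear operator is genuinely uniformly parabolic in the negative-curvature regime, for which the structure $\adj\Ein=\det\opEin\,\Ein^{-1}$ established in Step~1 is essential.
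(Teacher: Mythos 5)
Your overall route is the same as the paper's (DeTurck trick with the Ricci-flow vector field, standard parabolic theory for the modified flow, pullback by the flow of $W$, and the harmonic-map heat flow for uniqueness), but there is a genuine gap at the central analytic step. In Step 2 you claim that ``the added Lie term contributes exactly in the gauge directions, and combined with Step 1 it makes the right-hand side strictly parabolic.'' This is the Ricci-flow heuristic, and it fails here: the principal symbol of $\adj\Ein$ is built from the \emph{Einstein} metric (it is $\abs{\xi}^2_{\Ein}V - 2\Sym\xi\otimes V(\sharp_{\Ein}\xi,\cdot) + \Tr_{\Ein}V\,\xi\otimes\xi$, \Cref{lem:xcf_symbol}), while the symbol of $\Lie_W g$ for the DeTurck vector field $W^k=g^{ij}(\Gamma^k_{ij}-\bar\Gamma^k_{ij})$ is built from $g$. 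For the Ricci flow the two cancel exactly, leaving $\abs{\xi}^2_g V$; for the XCF they do not, and the modified symbol retains the terms $2\Sym\xi\otimes V(\sharp\xi-\sharp_{\Ein}\xi,\cdot)+(\Tr_{\Ein}V-\Tr_g V)\xi\otimes\xi$ as in \eqref{eq:dtxcf_symbol}. Nor does ``positive on a complement of the gauge directions plus positive on the gauge directions'' imply positivity of the sum: the two decompositions are taken with respect to different metrics ($g$ versus $\Ein$) and the cross terms are exactly what must be controlled. Showing that \eqref{eq:dtxcf_symbol} is nonetheless uniformly elliptic in the negative-curvature regime is the non-obvious content of Buckland's theorem and is the crux of short-time existence; in your write-up it is asserted (and relegated to a ``secondary issue'') rather than proved.

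A smaller point: you misidentify the gap in the original argument of Chow--Hamilton. It was not in the two-way correspondence for uniqueness, but in the Nash--Moser implicit-function-theorem scheme: the relevant integrability operator $L_g$ for the XCF depends on $g$ to \emph{second} order (unlike the first-order dependence exploited for the Ricci flow), so Hamilton's original argument does not transfer; see \Cref{rem:xcf_rf}. Your Step 3 (pullback by the flow of $W$, maximal interval, and uniqueness via the harmonic-map heat flow identification) is in line with the paper and is fine once the ellipticity of the DeTurck-modified operator is actually established.
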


To prove the theorem, we need the principal symbol of the cross curvature tensor. Recall that for a nonlinear, second order differential operator \(D : E \to F\) acting on vector bundles \(E, F\) over \(M\), we define
\begin{equation}
\label{eq:symbol}
\sigma_{\xi} [D_s] = \sigma_{\xi} [D'_s]
\end{equation}
where \(s \in \Gamma(E)\) is a section of \(E\) and
\[
D'_s (u) = \partial_w|_{w=0} D(s + w u)
\]
is the linearisation of \(D\) around \(s\) acting on sections \(u \in \Gamma(E)\). Recall that the principal symbol, \(\sigma_{\xi} [D'_s]\) of \(D'_s\) is obtained by replacing second order derivatives in \(D'_s (u)\) by components of \(\xi\).

Recall that \(\Ein\) is positive definite when \(g\) has negative sectional curvature, hence \(\Ein\) is itself a metric. We may thus raise and lower indices using \(\Ein\) as well as take traces with \(\Ein\). However, rather than using \(\Ein\) directly to raise indices we use the metric raising of \(\Ein^{\sharp}\) defined on one-forms by \(\Ein^{\sharp} (\alpha, \beta) = E(\alpha^{\sharp}, \beta^{\sharp})\). The symbol may be conveniently computed using the equivalent formulation of the Cross Curvature tensor in \Cref{lem:xcf_equiv}.

\begin{lemma}[{\cite[Lemma 4]{MR2055396}; \cite[Theorem 1]{MR2207496}}]
\label{lem:xcf_symbol}
The principal symbol of the cross curvature tensor is,
\[
\sigma_{\xi} [\adj\Ein_g] (V) = \abs{\xi}^2_{\Ein}V - 2 \Sym \xi \otimes V(\sharp_{\Ein}\xi, \cdot) + \Tr_{\Ein} V \xi \otimes \xi.
\]
\end{lemma}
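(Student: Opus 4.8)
The plan is to follow the hint preceding the statement and use the pointwise identity $\adj\Ein = -\tfrac12\,\Ric_{\Ein}$ of \Cref{lem:xcf_equiv} (valid in dimension three), so that it suffices to compute the principal symbol of the linearisation of $g\mapsto\Ric_{\Ein}(g)$ and multiply by $-\tfrac12$. In a local frame one has $(\Ric_{\Ein})_{ij} = \opEin^{am}\Rm_{aijm}$, where $\opEin^{am} = g^{ak}g^{ml}\Ein_{kl}$. The operator $g\mapsto\Ric_{\Ein}(g)$ depends on $g$ through the curvature tensor $\Rm$, through the Einstein tensor $\Ein$, and through the raising metrics $g^{ak}g^{ml}$; only the first two are second order in $g$, so the zeroth-order variation of $g^{ak}g^{ml}$ and the zeroth-order term $-\tfrac{\Sc}{2}V$ of $\Ein'_g(V)$ do not enter the symbol, leaving
\[
\sigma_{\xi}[(\Ric_{\Ein})'_g](V)_{ij} = \opEin^{am}\,\sigma_{\xi}[\Rm'_g](V)_{aijm} + g^{ak}g^{ml}\,\sigma_{\xi}[\Ein'_g](V)_{kl}\,\Rm_{aijm}.
\]

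The inputs are the classical symbols of the curvature operators: for the Riemann tensor,
\[
\sigma_{\xi}[\Rm'_g](V)_{abcd} = \tfrac12\br{\xi_a\xi_c V_{bd} + \xi_b\xi_d V_{ac} - \xi_a\xi_d V_{bc} - \xi_b\xi_c V_{ad}},
\]
and $\sigma_{\xi}[\Ein'_g](V) = \sigma_{\xi}[\Ric'_g](V) - \tfrac12\,\sigma_{\xi}[\Sc'_g](V)\,g$ with
\[
\sigma_{\xi}[\Ric'_g](V)_{ij} = \tfrac12\br{\xi_i V_{jk}\xi^k + \xi_j V_{ik}\xi^k - \abs{\xi}^2 V_{ij} - \xi_i\xi_j\Tr_g V},
\]
\[
\sigma_{\xi}[\Sc'_g](V) = \xi^i\xi^j V_{ij} - \abs{\xi}^2\Tr_g V,
\]
each up to the global sign fixed by the convention $\nabla_a\nabla_b\mapsto\pm\xi_a\xi_b$. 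The feature that makes the final answer depend only on $\Ein$ and $g$ is special to three dimensions: the bare tensor $\Rm$ appearing as a coefficient in the second summand above is not independent data, but is reconstructed from $\Ein$ and $g$ by the Ricci decomposition \eqref{eq:ricci_decomp}, $\Rm = -\Ein\owedge g + \tfrac{\Tr\opEin}{2}\,g\owedge g$. Substituting this, contracting and multiplying by $-\tfrac12$ produces an expression in $g$, $\Ein$, $\xi$ and $V$ alone; since $\Ein>0$ under our hypothesis, $\opEin$ is invertible, and the $\opEin^{-1}$'s that appear when one re-expresses the resulting $\Ein$-contractions relative to $g$ are exactly what turn the answer into the stated form $\abs{\xi}^2_{\Ein}V - 2\Sym\xi\otimes V(\sharp_{\Ein}\xi,\cdot) + \Tr_{\Ein}V\,\xi\otimes\xi$.

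In practice the computation should be organised by diagonalising: choose a $g$-orthonormal frame $\{E_i\}$ of eigenvectors of $\opEin$ with eigenvalues $\lambda_i>0$, so that $\opEin$ and, by \eqref{eq:ricci_decomp}, the curvature operator act diagonally on two-forms, and only the components $\xi_i$, $V_{ij}$ and the $\lambda_i$ need be tracked. I expect the main obstacle to be purely bookkeeping: after expanding $\Rm$ via \eqref{eq:ricci_decomp} into its half-dozen $\Ein$-and-$g$ monomials and pairing these against the four-term symbol of $\Rm'_g$ and the three-term symbol of $\Ein'_g$, a sizeable number of contractions must be combined, and one must fix once and for all the sign conventions for the Kulkarni--Nomizu product and for the symbol so that they are consistent with the curvature normalisation recorded in \eqref{eq:sectional}; these conventions pin down the overall normalisation in the statement. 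A tidier alternative, which avoids the Riemann-tensor symbol entirely and makes transparent why no curvature term survives, is to linearise $\adj\opEin$ directly through the algebraic identity $\adj\opEin = \opEin^2 - (\Tr\opEin)\opEin + e_2(\opEin)\Id$ valid in dimension three (with $e_2$ the second elementary symmetric function of the eigenvalues): since the adjugate is a $g$-independent algebraic operation on endomorphisms, $\sigma_{\xi}[(\adj\opEin)'_g](V) = \br{D\adj}_{\opEin}\!\br{\sigma_{\xi}[\opEin'_g](V)}$ with the elementary derivative $\br{D\adj}_A(B) = AB + BA - (\Tr B)A - (\Tr A)B + \br{(\Tr A)(\Tr B) - \Tr(AB)}\Id$; substituting $A = \opEin$ (diagonal in the chosen frame) and $B = \sigma_{\xi}[\opEin'_g](V)$ and lowering the free index with $g$ reaches the same formula with appreciably less algebra.
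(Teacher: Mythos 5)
Your plan is viable, and there is in fact no proof in the paper to compare against: the lemma is quoted from Chow--Hamilton and Buckland, with only the hint that the symbol ``may be conveniently computed'' from \Cref{lem:xcf_equiv}. Your first route is exactly that hinted computation, and you have correctly isolated its one genuinely delicate point: the coefficient $\opEin^{am}$ in $\Ric_{\Ein}$ varies at second order, so the term $\sigma_{\xi}[\Ein'_g](V)\cdot\Rm$ does contribute to the principal symbol, and the bare curvature coefficient only disappears after being rewritten through the three-dimensional decomposition \eqref{eq:ricci_decomp}; a naive computation varying only $\Rm$ yields just part of the answer. Your second route, via $\adj\opEin=\opEin^{2}-(\Tr\opEin)\opEin+e_{2}(\opEin)\Id$ and the chain rule for symbols, is genuinely different from the cited computations and is the cleaner of the two: since the adjugate is applied pointwise and the raising/lowering metrics vary at zeroth order, the symbol is exactly $(D\adj)_{\opEin}$ applied to $\sigma_{\xi}[\opEin'_g](V)$, which explains structurally why only $\Ein$ and $g$ survive without ever invoking \eqref{eq:ricci_decomp}.

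One concrete caveat about ``reaching the same formula'': with the conventions fixed by \Cref{rem:xcf_rf} (where $\sigma_{\xi}[-2\Ric_g](V)=\abs{\xi}^2_g V-2\Sym\xi\otimes V(\sharp\xi,\cdot)+\Tr_g V\,\xi\otimes\xi$), either of your routes produces exactly one half of the displayed right-hand side. You can see this already at a background with $\Ein=g$: there $(D\adj)_{\Id}(B)=(\Tr B)\Id-B$ with $B=\sigma_{\xi}[\Ein'_g](V)$, which simplifies to $-\sigma_{\xi}[\Ric'_g](V)$, i.e.\ half of the stated expression, and the factor persists for general $\Ein>0$. The stated expression is really the symbol of the full flow speed $2\adj\Ein$ from \eqref{eq:xcf} --- parallel to the Ricci-flow remark being phrased for $-2\Ric$, and it is this normalisation that combines with the Lie-derivative symbol to give \eqref{eq:dtxcf_symbol}. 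So this is a labelling slip in the statement rather than a flaw in your argument, and it is irrelevant for the ellipticity discussion, but do not expect the constant to come out as written for $\adj\Ein$ alone, and do not burn time hunting for a ``missing'' factor of two in your sign conventions for the symbol or the Kulkarni--Nomizu product.
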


The degeneracy of the XCF is manifested in the second two terms. If they were absent, then the symbol would simply be \(V \mapsto \abs{\xi}^2_{\Ein} V\) which is clearly elliptic. However, the presence of the second two terms means that the symbol has a kernel and is hence not elliptic. This is what the DeTurck approach aims to address.

The proof of \Cref{thm:xcf_existence_uniqueness} makes us of the DeTurck XCF,
\begin{equation}
\label{eq:dtxcf}
\partial_t g = \adj\Ein (g) + \Lie_W g =: \dtxcf
\end{equation}
where
\[
W = g_0^{-1} \dive_g \left(g_0 - \frac{1}{2}\Tr_g g_0\right)^{\sharp}.
\]
Then the symbol is
\begin{equation}
\label{eq:dtxcf_symbol}
\sigma_{\xi} [\dtxcf_g] (V) = \abs{\xi}^2_{\Ein}V + 2 \Sym \xi \otimes V(\sharp \xi - \sharp_{\Ein}\xi, \cdot) + \left(\Tr_{\Ein} V - \Tr_g V\right) \xi \otimes \xi.
\end{equation}
The computation of the symbol of \(\Lie_W g\) is well known and is the same as that used for the Ricci flow. See \cite{MR2207496}, \cite[Sections 3.3, 3.4]{MR2061425} or \cite[Chapter 5]{MR2265040}.

That \(\sigma_{\xi}[\dtxcf_g]\) is uniformly elliptic is not readily apparent, but this is what Buckland shows \cite{MR2207496}. Existence now follows easily since the DeTurck XCF \eqref{eq:dtxcf} is uniformly parabolic hence unique solutions exist given any smooth initial data. Then if \(\bar{g}\) denotes the solution of the DeTurck XCF, \(g = \varphi_t^{\ast} \bar{g}\) solves the XCF where \(\varphi_t\) is the flow of \(W\).

For uniqueness, more work is required, but note that the vector field \(W\) is the same as that used for Ricci flow. Thus the same proof as for Ricci flow applies and we may appeal to \cite[Section 6]{MR1375255}. In the formulation used here, \(\varphi_t\) solves the harmonic map heat flow \((M, \bar{g}) \to (M, g_0)\). See \cite[Section 5.2]{MR2265040} and \cite[Sections 3.3, 3.4]{MR2061425} for details.

\begin{rem}
\label{rem:xcf_rf}

For the Ricci flow, the DeTurck Ricci flow is defined to be
\[
\partial_t g = -2\Ric (g) + \Lie_{W} g =: \dtrf (g)
\]
with the same \(W\) as \eqref{eq:dtxcf}. In fact, \(W\) is adapted to the Ricci flow rather than the XCF since the symbol for the Ricci flow is
\[
\sigma_{\xi} [-2\Ric_g] (V) = \abs{\xi}^2_{g}V - 2 \Sym \xi \otimes V(\sharp \xi, \cdot) + \Tr_g V \xi \otimes \xi.
\]
Then for the DeTurck Ricci flow, the last two terms cancel and the symbol for the DeTurck Ricci flow becomes
\[
\sigma_{\xi} [\dtrf_g] (V) = \abs{\xi}_g^2 V
\]
which is clearly uniformly elliptic. The choice of \(W\) comes from the fact \(\dive_g \Ein = 0\) so that defining \(L_g(T) = \dive_g(T - \tfrac{1}{2} \Tr_g T)\), we have the integrability condition
\[
L_g(\Ric(g)) = \dive_g \Ein(g) = 0.
\]
Here \(g \mapsto L_g\) depends on \(g\) to first order, and computing the symbol of \(g \mapsto L_g(\Ric(g)\) also shows the degeneracy in the operator \(\Ric\). The original proof of short time existence and uniqueness for Ricci flow made use of this fact employing the Nash-Moser implicit function theorem. See \cite[Sections 4-6]{Hamilton:/1982}.

One might try a similar approach for XCF, defining an appropriate \(W\) to cancel terms so that the symbol becomes \(V \mapsto |\xi|_{\Ein}^2 V\). Equation \eqref{eq:xcf_integ} below suggests the choice
\[
W = g_0^{-1}\left(\Ein^{ij} \nabla_i \adj \Grav(g_0)_{jk} - \frac{1}{2} \Ein^{ij} \nabla_k \adj\Grav(g_0)_{ij}\right)^{\sharp}
\]
where \(\Grav(g_0) = g_0 - \tfrac{1}{2} \Tr_g g_0\) is the Einstein gravity tensor. Presumably such an approach works, with the appropriate replacement for the harmonic map heat flow to obtain uniqueness. Note this equation also leads to the integrability equation
\[
L_g(\adj\Ein(g)) = 0
\]
where \(L_g(T) = \Ein^{ij} \nabla_i \adj \Grav(T)_{jk} - \frac{1}{2} \Ein^{ij} \nabla_k \adj\Grav(T)_{ij}\).

\cite{MR2207496} observed that \(g \mapsto L_g\) is second order in \(g\) so that the argument in \cite[Sections 4-6]{Hamilton:/1982} cannot be applied. However, as we have seen, even though the terms in \eqref{eq:dtxcf_symbol} do not cancel, the DeTurck flow using \(W\) from the Ricci flow is still uniformly parabolic. Existence for XCF follows immediately, and uniqueness is obtained by the known result using the harmonic map heat flow as in the Ricci flow.
\end{rem}

\subsection{Basic Identities And Evolution Equations}
\label{subsec:xcf_identities}

Now we give some fundamental identities and evolution equations for the study of the XCF. Since there are many traces and swapping slots in what follows, it is convenient to use index notation. For convenience, we define \(V = \Ein^{-1}\) (that is \(V(X, Y) = g(\opEin^{-1} (X), Y)\)).

The first basic identity is an analogue for the Cross Curvature tensor of the fact that according to the contracted second Bianchi identity, \(\dive_g \Ein = 0\). It is fundamental in deriving various identities and is closely related to short time existence and uniqueness as discussed above. According to \cite[Lemma 1 (b)]{MR2055396},
\begin{equation}
\label{eq:xcf_integ}
\Ein^{ij} \nabla_i \adj\Ein_{jk} = \frac{1}{2} \Ein^{ij} \nabla_k \adj\Ein_{ij}.
\end{equation}

The general, non integrable case poses a number of difficulties. At the heart of these difficulties is the \emph{Devil tensor} \eqref{eq:devil} that vanishes if and only if the solution is integrable (\Cref{lem:cubicform_codazzi}). Let us define
\begin{equation}
\label{eq:T}
\T^{kij} = \Ein^{kl}\nabla_l \Ein^{ij}, \quad \T^i = V_{jk}\T^{ijk} = \Ein^{ij}\nabla_j \ln\det\opEin.
\end{equation}
From the irreducible decomposition of $\T^{ijk}$ under the action of \(O(3)\) via the metric \(V\) we have (\cite[p. 6]{MR2055396}),
\[
\T^{ijk}-\T^{jik} = \mathrm{L}^{ijk}-\mathrm{L}^{jik} + \frac{1}{2}\left(\T^i\Ein^{jk}-\T^j\Ein^{ik}\right)
\]
where the traces of $\mathrm{L}^{ijk}$ with respect to $V$ are zero. The Devil tensor is defined by
\begin{equation}
\label{eq:devil}
\Dv^{ijk}=\mathrm{L}^{ijk}-\mathrm{L}^{jik} = \T^{ijk}-\T^{jik} - \frac{1}{2}\left(\T^i\Ein^{jk}-\T^j\Ein^{ik}\right).
\end{equation}
It satisfies the curvature-like identities,
\[
\Dv^{ijk} = -\Dv^{jik},\quad \Dv^{ijk}+\Dv^{kij}+\Dv^{jki} = 0,
\]
and trace identities,
\begin{align*}
V_{ij}\Dv^{ijk}=&V_{ik}\Dv^{ijk}=V_{jk}\Dv^{ijk}=0,\\
V_{ij}\nabla_k\Dv^{kij}=&\frac{1}{2}|\Dv^{ijk}|_V^2.
\end{align*}

\begin{lemma}
\label{lem:cubicform_codazzi}

We have the following identity.
\[
|\Dv^{ijk}|_V^2=\frac{1}{\det\opEin}|\nabla_i\Ob_{jk}-\nabla_j\Ob_{ik}|_{\Ein}^2.
\]
In particular, \(\Dv\equiv 0\) if and only if \(\Ob\) is Codazzi.
\end{lemma}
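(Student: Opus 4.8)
The plan is to prove the displayed identity by a short tensor computation and then read off the equivalence. I keep the notation of the excerpt: $V=\Ein^{-1}$ is the $(0,2)$-tensor $V_{ij}=g(\opEin^{-1}\cdot,\cdot)$, the symbol $\Ein^{ij}$ always means the $g$-raised $\Ein^{\sharp}$ (so $\Ein^{ij}V_{jk}=\delta^i_k$), and the norm $|\cdot|_{\Ein}$ is that of $\Ein^{\sharp}$. First I would record three routine identities: $\nabla_l V_{jk}=-V_{jp}V_{kq}\nabla_l\Ein^{pq}$ (differentiating $\Ein^{-1}\Ein=\Id$); $\nabla_l\Ein^{pq}=V_{lm}\T^{mpq}$ and $\nabla_l\ln\det\opEin=V_{lm}\T^m$ (inverting the definitions in \eqref{eq:T} via $V_{lm}\Ein^{mn}=\delta_l^n$); and $\nabla_l\sqrt{\det\opEin}=\tfrac12\sqrt{\det\opEin}\,\nabla_l\ln\det\opEin$ together with $V_{jk}=V_{jp}V_{kq}\Ein^{pq}$. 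Substituting these into the product-rule expansion of $\nabla_l\Ob_{jk}$, where $\Ob_{jk}=\sqrt{\det\opEin}\,V_{jk}$, collapses everything to
\[
\nabla_l\Ob_{jk}=\sqrt{\det\opEin}\;V_{lm}V_{jp}V_{kq}\left(\tfrac12\T^m\Ein^{pq}-\T^{mpq}\right).
\]

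Next I would antisymmetrise in the derivative index and the first tensor index. Setting $l=i$, subtracting the expression with $i$ and $j$ interchanged, and relabelling the dummy pair $m\leftrightarrow p$ in the second term, the bracket becomes $-\bigl[(\T^{mpq}-\T^{pmq})-\tfrac12(\T^m\Ein^{pq}-\T^p\Ein^{mq})\bigr]$, which by the definition \eqref{eq:devil} of the Devil tensor is exactly $-\Dv^{mpq}$. Hence
\[
\nabla_i\Ob_{jk}-\nabla_j\Ob_{ik}=-\sqrt{\det\opEin}\;V_{im}V_{jp}V_{kq}\,\Dv^{mpq},
\]
that is, the Codazzi defect of $\Ob$ is, up to the scalar $-\sqrt{\det\opEin}$, the Devil tensor with its indices lowered by $V$. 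Taking the squared $\Ein^{\sharp}$-norm of both sides and using $\Ein^{ii'}V_{im}V_{i'm'}=V_{mm'}$ on each of the three slots, every factor of $\Ein^{\sharp}$ is absorbed and one is left with $\det\opEin\cdot V_{mm'}V_{pp'}V_{qq'}\Dv^{mpq}\Dv^{m'p'q'}=\det\opEin\,|\Dv|_V^2$; dividing by $\det\opEin$ gives the asserted identity.

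The equivalence is then immediate: since $\opEin>0$, both $V$ and $\Ein^{\sharp}$ are positive definite, so $|\Dv|_V^2=0$ iff $\Dv\equiv 0$, and $|\nabla_i\Ob_{jk}-\nabla_j\Ob_{ik}|_{\Ein}^2=0$ iff $\nabla_i\Ob_{jk}=\nabla_j\Ob_{ik}$ for all $i,j,k$; as $\Ob$ is symmetric, $\nabla\Ob$ is automatically symmetric in its last two slots, so this last condition says exactly that $\nabla\Ob$ is totally symmetric, i.e.\ that $\Ob$ is Codazzi. I expect the only delicate point to be the bookkeeping in the product-rule and antisymmetrisation step — in particular, getting the trace correction $\tfrac12(\T^m\Ein^{pq}-\T^p\Ein^{mq})$ to reappear with precisely the coefficient in \eqref{eq:devil} (the $\tfrac12$ being supplied by the derivative of the square root), and keeping the $g$-raised $\Ein^{ij}$ and the $\Ein^{-1}$-lowering $V_{ij}$ straight so that no spurious power of $\det\opEin$ creeps in.
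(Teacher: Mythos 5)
Your proof is correct and follows essentially the same route as the paper's: both hinge on the product rule applied to $\Ob=\sqrt{\det\opEin}\,V$ (the paper writes $V=\Ob/\sqrt{\det\opEin}$ and expresses $\T^{kij}-\T^{ikj}$ in terms of $\nabla\Ob$, whereas you expand $\nabla\Ob$ in terms of $\T$), identify the antisymmetrised quantity with $\Dv$ via \eqref{eq:devil}, and then absorb the $\Ein^{ij}$/$V_{ij}$ contractions to get the stated norm identity. The final equivalence argument (positive definiteness plus symmetry of $\nabla\Ob$ in its last two slots) is the same standard observation, so no further comment is needed.
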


\begin{proof}
Using the definition we calculate
\begin{align*}
\T^{kij}=&-\Ein^{kl}\Ein^{im}\Ein^{jn}\nabla_l V_{mn} = -\Ein^{kl}\Ein^{im}\Ein^{jn}\nabla_l \left(\frac{1}{\sqrt{\det\opEin}} \Ob_{mn}\right)\\
=&-\frac{1}{\sqrt{\det\opEin}}\Ein^{kl}\Ein^{im}\Ein^{jn}\nabla_l \Ob_{mn}+\frac{1}{2}\T^k\Ein^{ij}.
\end{align*}
Thus we obtain
\begin{align*}
\T^{kij}-\T^{ikj}=&\frac{\Ein^{kl}\Ein^{im}\Ein^{jn}}{\sqrt{\det\opEin}}\left(\nabla_m \Ob_{ln}-\nabla_l \Ob_{mn}\right)+\frac{1}{2}(\T^k\Ein^{ij}-\T^i\Ein^{kj}).
\end{align*}
Then by definition,
\[
\Dv^{kij} = \frac{\Ein^{kl}\Ein^{im}\Ein^{jn}}{\sqrt{\det\opEin}}\left(\nabla_m \Ob_{ln}-\nabla_l \Ob_{mn}\right)
\]
and hence
\[
|\Dv^{ijk}|_V^2\det\opEin = |\nabla_i\Ob_{jk}-\nabla_j\Ob_{ik}|_{\Ein}^2.
\]
\end{proof}

To round out this section, let us point out some evolution equations. Define the operator,
\begin{equation}
\label{eq:box}
\Box = \Tr_{\Ein^{\sharp}} \nabla^2 = \Ein^{ij} \nabla^2_{ij}.
\end{equation}
It is uniformly elliptic for each fixed \(t\) on any time interval \([0, \tau]\) on which \(\Ein > 0\).

We have
\begin{equation}
\label{eq:dtEin}
\partial_t \Ein^{ij} = \Box \Ein^{ij} - \nabla_{\ell} \Ein^{ki} \nabla_k \Ein^{\ell j} - 4 \det \opEin g^{ij}.
\end{equation}
and
\begin{equation}
\label{eq:dtdetEin}
\partial_t \det \opEin = \Box \det \opEin -\left(\frac{1}{2(\det\opEin)^2} \abs{\Ein^{ij} \nabla_j \det \opEin}^2 - \frac{1}{2}\abs{\mathrm{D}^{ijk}}_V^2 + 2\mathrm{H}\right)\det \opEin.
\end{equation}
Here $\mathrm{H}=\Tr_g\adj\Ein.$ 
Equation \eqref{eq:dtEin} is obtained by combining equation \eqref{eq:xcf_integ} (\cite[Lemma 1(a)]{MR2055396}) with \cite[Lemma 5]{MR2055396} giving the evolution of \(\Ein\). Equation \eqref{eq:dtdetEin} is then obtained from \eqref{eq:dtEin} using \eqref{eq:T}.

The maximum principle cannot be immediately applied to ensure positivity of \(\Ein\) (and hence negative sectional curvature) is preserved since the last term of each equation has the wrong sign. However, a slightly less direct approach using the evolution of \(\det \opEin\) effectively leads to the desired conclusion (\Cref{prop:negative_perserved}).

The XCF expands negatively curved metrics as can be seen from the evolution of the volume:
\begin{equation}
\label{eq:dtvol}
\frac{d}{dt} \operatorname{Vol}_g (M) = \int_M \mathrm{H}d\mu > 0.
\end{equation}

Such expansion is also apparent in the evolution of hyperbolic metrics.
\begin{example}
\label{eq:hyperbolic}

Let \(g_0\) be a hyperbolic metric with constant sectional curvature \(K_0 < 0\). Then the solution of the XCF is
\[
g(t) = \sqrt{4K_0^2 t + 1} g_0
\]
since scaling \(g_0\) by \(r\) results in \(\adj\Ein(g_t) = \frac{1}{r} \adj\Ein(g_0) = \frac{K_0^2}{r} g_0\).
\end{example}

We see that up to scale, hyperbolic metrics are fixed points. In fact, they are the only fixed points up to scale and reparametrisation (i.e. solitons). See \Cref{subsec:xcf_harnack_solitons} below.

\subsection{Towards Hyperbolic Convergence}
\label{subsec:xcf_hyperbolic_convergence}

The following results support \Cref{conj:chow_hamilton} that the XCF deforms arbitrary negatively curved metrics to a hyperbolic metric. Under the XCF, it is not so easy to prove directly that negative sectional curvature is preserved. However, it is possible to prove that if \(\det \opEin \to 0\), then a singularity must occur and thus negative curvature is effectively preserved since that corresponds to positive of \(\Ein\).

\begin{prop}[{\cite[p. 8]{MR2055396}}]
\label{prop:negative_perserved}

Let $T$ be the maximal time of smooth existence of XCF. Then \(\det \opEin > 0\) on \([0, T)\) and hence the sectional curvatures remain negative as long as the solution is smooth.
\end{prop}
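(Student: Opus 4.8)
The plan is to extract from the evolution equation \eqref{eq:dtdetEin} a scalar parabolic inequality to which the maximum principle applies, so that $\det\opEin$ cannot reach zero in finite time. The key observation is that the "bad" term in \eqref{eq:dtdetEin} — the one with the wrong sign for a direct application of the maximum principle to $\det\opEin$ itself — is the term $-2\mathrm{H}\det\opEin$ where $\mathrm{H}=\Tr_g\adj\Ein = \lambda_1\lambda_2+\lambda_2\lambda_3+\lambda_1\lambda_3 > 0$. However, this term is \emph{linear in $\det\opEin$ with a coefficient controlled by the solution on the compact time interval}: on any $[0,\tau]\subset[0,T)$ the metric $g$, the tensor $\Ein$, and hence $\mathrm{H}$ are smooth and bounded, so there is a constant $C=C(\tau)$ with $\mathrm{H}\le C$. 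Meanwhile the first term inside the parenthesis, $\frac{1}{2(\det\opEin)^2}\abs{\Ein^{ij}\nabla_j\det\opEin}^2$, has the \emph{good} sign (it is subtracted and it is nonnegative), and the Devil-tensor term $+\frac12\abs{\mathrm{D}^{ijk}}_V^2\det\opEin$ also has the good sign. So on $[0,\tau]$ we get the differential inequality
\[
\partial_t \det\opEin \ge \Box\det\opEin - 2C\,\det\opEin.
\]

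From here the argument is a standard comparison/maximum principle on a closed manifold. First I would note that $\det\opEin(\cdot,0)>0$ everywhere since $g_0$ has negative sectional curvature, so $\min_M\det\opEin(\cdot,0)=:\delta_0>0$. Then the function $u(t)=\delta_0 e^{-2Ct}$ is a (spatially constant) subsolution of the same inequality, and since $\Box$ is uniformly elliptic on $[0,\tau]$ (as noted after \eqref{eq:box}), the maximum principle gives $\det\opEin\ge \delta_0 e^{-2Ct}>0$ on $M\times[0,\tau]$. Since $\tau<T$ was arbitrary, $\det\opEin>0$ on all of $[0,T)$. Because $\det\opEin=\lambda_1\lambda_2\lambda_3$ and the $\lambda_i$ depend continuously on $t$ with all $\lambda_i(\cdot,0)>0$, positivity of the product forces all three eigenvalues to stay positive (they cannot change sign without the product vanishing), i.e. $\Ein>0$, which by \eqref{eq:sectional} is exactly negativity of the sectional curvatures.

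A small technical point to handle carefully is that the constant $C(\tau)$ a priori depends on $\tau$, so the bound $\delta_0 e^{-2Ct}$ degenerates as $\tau\to T$; this is fine because we only need \emph{strict} positivity on each compact subinterval, not a uniform lower bound up to $T$. One should also be a little careful that the term $\frac{1}{2(\det\opEin)^2}\abs{\Ein^{ij}\nabla_j\det\opEin}^2$ is genuinely harmless: although it is singular-looking as $\det\opEin\to 0$, it enters with a favorable sign and at a spatial minimum of $\det\opEin$ the gradient $\nabla\det\opEin$ vanishes anyway, so it contributes nothing obstructive to the maximum principle computation.

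\emph{Main obstacle.} The only real subtlety is the bookkeeping of signs in \eqref{eq:dtdetEin} — recognizing which terms help and which hurt, and that the single harmful term $2\mathrm{H}\det\opEin$ is merely linearly controlled rather than genuinely destabilizing. Once that is seen, the maximum principle does the rest; there is no need for the finer geometric input (the Devil tensor, embeddability, etc.) that the non-integrable theory requires elsewhere. The phrasing in the statement — "if $\det\opEin\to 0$ then a singularity must occur" — is really just the contrapositive: smoothness on $[0,T)$ already forces the coefficient bounds that feed the maximum principle.
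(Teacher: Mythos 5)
Your proposal follows essentially the same route as the paper: the paper applies the maximum principle to \(\ln\det\opEin\), whose evolution \(\partial_t\ln\det\opEin=\Box\ln\det\opEin+\tfrac12\abs{\mathrm{T}^{ijk}-\mathrm{T}^{jik}}_V^2-2\mathrm{H}\) has \(-2\mathrm{H}\) as the only unfavourable term, and concludes that \(\det\opEin\to0\) at some \(t_0<T\) would force \(\mathrm{H}\to\infty\), contradicting smoothness before \(T\); your bound \(\mathrm{H}\le C(\tau)\) on compact subintervals together with the comparison \(\det\opEin\ge\delta_0e^{-2Ct}\) is the direct (rather than contrapositive) form of the same computation, and your final continuity-of-eigenvalues step correctly upgrades \(\det\opEin>0\) to \(\Ein>0\).

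One genuine correction is needed in your sign bookkeeping. In \eqref{eq:dtdetEin} the gradient term sits inside a parenthesis that is \emph{subtracted}, so its net contribution to \(\partial_t\det\opEin\) is \(-\tfrac{1}{2\det\opEin}\abs{\Ein^{ij}\nabla_j\det\opEin}^2\le0\), which is \emph{unfavourable} for a lower bound, not favourable as you assert; consequently your displayed inequality \(\partial_t\det\opEin\ge\Box\det\opEin-2C\det\opEin\) is false as a pointwise statement. The argument is rescued only by the remark you make at the end: at a spatial minimum of \(\det\opEin\) the gradient vanishes, so one should run the comparison through the evolution of \(\min_M\det\opEin\) (Hamilton's trick), giving \(\tfrac{d}{dt}\min\det\opEin\ge-2C\min\det\opEin\) and hence the exponential lower bound; equivalently, pass to \(\ln\det\opEin\) as the paper does, where the gradient contribution is absorbed into the nonnegative term \(\tfrac12\abs{\mathrm{T}^{ijk}-\mathrm{T}^{jik}}_V^2\). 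With that repair (and the implicit open--closed argument ensuring \(\Ein>0\) persists so that \eqref{eq:dtdetEin} and the ellipticity of \(\Box\) remain valid, a point the paper's sketch also glosses over) your proof is correct.
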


The proof follows from equation \eqref{eq:dtdetEin} by writing,
\[
\partial_t \ln \det \opEin = \Box \ln \det \opEin + \frac{1}{2}\abs{\mathrm{T}^{ijk} - \mathrm{T}^{jik}}_V^2- 2 \mathrm{H}.
\]
The last term has the wrong sign to apply the maximum principle. However, \(\partial_t \min \ln \det\opEin \geq -2 \mathrm{H}\) at the minimum of \(\ln \det\opEin\). If $t_0<T$ and \(\ln\det\opEin \to -\infty\) as $t\to t_0$, then \(\mathrm{H} \to \infty\). In other words one eigenvalue of \(\opEin\) goes to zero while another goes to \(\infty\) as $t$ approaches $t_0$.

Going the other way, namely proving that if \([0, T)\) is the maximal time of existence with \(T < \infty\), then necessarily \(\inf_{t \in [0, T)} \det \opEin = 0\) would show that finite time existence implies blow up of \(\Ein\). Such a result would serve as a proxy for more general estimates, such as the smoothing estimates enjoyed by the Ricci flow. The lack of such estimates (even \(C^2\) estimates) is perhaps the major outstanding issue for the XCF. In particular, it could be that the solution blows up in finite time, yet \(\det \opEin\) has a positive lower bound.

The following monotonicity property also supports \Cref{conj:chow_hamilton} that the flow evolves negative curvature metrics to a hyperbolic metric.

\begin{thm}[{\cite[Theorem 8]{MR2055396}}]
\label{thm:hyperbolicity}
Under the XCF
\[J(M_t):=\int \frac{1}{3} \Tr \opEin - (\det\opEin)^{\frac{1}{3}}d\mu\]
is non-increasing. Moreover, \(\frac{d}{dt}J(M_t) = 0\) if and only if $g$ has constant curvature.
\end{thm}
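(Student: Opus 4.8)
The plan is to differentiate $J(M_t)$ in time and show the result is manifestly $\leq 0$, with equality forcing $\mathrm{D}\equiv 0$ and $\mathrm{T}^i\equiv 0$, which together pin down constant curvature. First I would compute $\frac{d}{dt}J$ using the evolution equations already assembled in the excerpt: $\partial_t\det\opEin$ from \eqref{eq:dtdetEin}, the companion evolution $\partial_t\Tr\opEin$ obtained by tracing \eqref{eq:dtEin} with $g$ (noting that $\partial_t g_t = 2\adj\Ein$ contributes extra terms when tracing, since the metric itself is moving), and the evolution of the volume form $\partial_t d\mu = \mathrm{H}\,d\mu$ from \eqref{eq:dtvol}. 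Since $\Box$ is uniformly elliptic for each fixed $t$ while $\Ein>0$ (which holds on $[0,T)$ by \Cref{prop:negative_perserved}), the Laplacian-type terms $\int\Box(\cdot)\,d\mu$ do \emph{not} simply integrate to zero because $\Box=\Ein^{ij}\nabla^2_{ij}$ is not the divergence of anything canonical; I would instead use the identity \eqref{eq:xcf_integ}, i.e. $\Ein^{ij}\nabla_i\adj\Ein_{jk}=\tfrac12\Ein^{ij}\nabla_k\adj\Ein_{ij}$, to integrate by parts and convert $\int\Box f\,d\mu$ into a gradient term plus a curvature term, exactly the mechanism that makes the $\adj\Ein$-weighted Laplacian ``almost self-adjoint'' against $d\mu$.

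Next I would assemble the pieces. The key algebraic input is the pointwise inequality between the arithmetic and geometric means of the eigenvalues $\lambda_1,\lambda_2,\lambda_3>0$ of $\opEin$: $\tfrac13(\lambda_1+\lambda_2+\lambda_3)\geq(\lambda_1\lambda_2\lambda_3)^{1/3}$, with equality iff $\lambda_1=\lambda_2=\lambda_3$, which by \eqref{eq:sectional} is exactly constant sectional curvature. The time derivative of the integrand of $J$ should organize, after the integrations by parts, into a sum of a non-positive gradient term (a multiple of $|\mathrm{T}^i|^2$ or $|\Ein^{ij}\nabla_j\ln\det\opEin|^2$ weighted appropriately), a non-positive Devil-tensor term $-c|\mathrm{D}^{ijk}|_V^2$ coming from the $+\tfrac12|\mathrm{D}|_V^2$ in \eqref{eq:dtdetEin} with the sign flipped by the overall minus on $(\det\opEin)^{1/3}$, and a curvature term that the AM--GM inequality controls. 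The equality analysis then reads off: $\frac{d}{dt}J=0$ forces the gradient term to vanish (so $\det\opEin$ is constant along $\Ein$-gradient directions), the Devil tensor to vanish (so $\Ob$ is Codazzi by \Cref{lem:cubicform_codazzi}, hence the solution is integrable/embeddable by \Cref{thm:intg_embed}), and the AM--GM defect to vanish pointwise, giving $\lambda_1=\lambda_2=\lambda_3$ and hence constant curvature; conversely, constant curvature makes every term vanish, as \Cref{eq:hyperbolic} already shows the flow is a homothety.

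The main obstacle I expect is the bookkeeping in the integration by parts: because the metric is evolving and because $\Box$ is the $\Ein$-weighted rather than $g$-weighted trace of $\nabla^2$, the ``error'' terms produced when moving derivatives onto $d\mu$ and onto the weight $\Ein^{ij}$ are governed precisely by \eqref{eq:xcf_integ} and by the decomposition \eqref{eq:devil} of $\mathrm{T}^{ijk}$ into its trace part $\mathrm{T}^i$, its $\mathrm{L}^{ijk}$ part and the Devil tensor $\mathrm{D}^{ijk}$; getting all the constants right so that the final expression is a clean sum of a square-gradient term, a $-|\mathrm{D}|_V^2$ term and a term handled by AM--GM is where the real work lies. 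A secondary subtlety is that $(\det\opEin)^{1/3}$ is only smooth because $\det\opEin>0$ on $[0,T)$ (again \Cref{prop:negative_perserved}), so the differentiation under the integral sign is legitimate; and one should note that $M$ being closed is what licenses discarding the genuine total-divergence terms. Once the identity for $\frac{d}{dt}J$ is in hand, both the monotonicity and the rigidity statement are immediate.
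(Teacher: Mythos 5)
Your proposal follows essentially the same route as the paper (which defers the computation to Chow--Hamilton's Theorem 8): differentiate $J$, write $\frac{d}{dt}J$ as an AM--GM-controlled bulk term plus nonpositive multiples of $\abs{\Dv^{ijk}}_V^2$ and $\abs{\T^i}_V^2$, and read the equality case off from the vanishing of each piece. Indeed, taking $\eta=\tfrac13$ in the identity quoted in the proof of \Cref{thm:volume_monotonicity} and combining it with the direct computation $\frac{d}{dt}\int\tfrac13\Tr\opEin\,d\mu=\int\det\opEin\,d\mu$ gives
\[
\frac{d}{dt}J(M_t)=\int\Bigl(\det\opEin-\tfrac13\mathrm{H}\,(\det\opEin)^{1/3}\Bigr)d\mu-\int\Bigl(\tfrac16\abs{\Dv^{ijk}}_V^2+\tfrac1{18}\abs{\T^i}_V^2\Bigr)(\det\opEin)^{1/3}\,d\mu,
\]
and the first integrand is nonpositive because AM--GM applied to the pairwise products $\lambda_i\lambda_j$ of the eigenvalues of $\opEin$ yields $(\det\opEin)^{2/3}\le\tfrac13\mathrm{H}$, with equality precisely when $\lambda_1=\lambda_2=\lambda_3$. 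So your structural prediction of a gradient term, a Devil-tensor term and an AM--GM term is correct.

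Two points need fixing. First, your mechanism for the Laplacian terms is wrong (in your favour): since $\dive_g\Ein=0$ by the contracted second Bianchi identity, $\Box f=\Ein^{ij}\nabla_i\nabla_j f=\nabla_i\bigl(\Ein^{ij}\nabla_j f\bigr)$ \emph{is} a divergence, so $\int\Box f\,d\mu=0$ on the closed manifold and no gradient or curvature error terms are produced at that stage; equation \eqref{eq:xcf_integ}, which concerns $\adj\Ein$ rather than $\Ein$, is not the relevant identity here. Second, in the equality case the pointwise AM--GM equality only gives $\Ein=\lambda g$ for a \emph{function} $\lambda$, i.e.\ pointwise-isotropic sectional curvature; the conclusion ``hence constant curvature'' still requires $\lambda\equiv\const$. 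The paper closes this Schur-type gap using $\dive_g\Ein=\dive_g g=0$; alternatively you can extract it from a piece you already have, since $\abs{\T^i}_V^2\equiv0$ forces $\nabla\det\opEin=0$, hence $\lambda^3$ and so $\lambda$ constant --- but as written ``$\lambda_1=\lambda_2=\lambda_3$ and hence constant curvature'' skips this step and should be made explicit.
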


The theorem follows by direct computation and noting that by the arithmetic-geometric mean inequality applied to the eigenvalues of \(\opEin\), \(\tfrac{1}{3} \Tr\opEin \geq (\det \opEin)^{1/3}\) with equality if and only if \(\Ein = \lambda g\) for some smooth function \(\lambda\). In the equality case, since \(\dive_g \Ein = \dive_g g = 0\), we must then also have \(\lambda \equiv \text{ const.}\) and hence equality occurs if and only if \(g\) has constant sectional curvature equal to \(-\lambda\).

To finish this section, we consider the stability of hyperbolic metrics under the flow. Stability is necessary for \Cref{conj:chow_hamilton} to hold, and indeed this is the case. To state the result, some normalisation is necessary. Typically normalising to fix volume is the approach taken, but here following \cite{MR2448593} a different normalisation is chosen. For any \(K < 0\), note that a constant curvature metric \(g_K\) of sectional curvature \(K\) satisfies
\[
\adj\Ein = K^2 g
\]
by \eqref{eq:sectional} and \eqref{eq:cross_curvature}. Then the flow
\[
\partial_t g = 2\adj\Ein(g) - 2K^2 g
\]
has \(g_K\) as a fixed point. Note that up to scaling and diffeomorphism, solutions of this flow are equivalent to solutions of the XCF by \cite[Lemma 1]{MR2448593}.

The linearisation of the corresponding DeTurck flow around a hyperbolic metric is (\cite[Lemma 2]{MR2448593})
\[
\partial_t V = -K \Delta V -2K^2 \Tr_g V g + 2K^2 V.
\]
Then the spectrum of the right hand side is contained in \((-\infty, -1]\) (\cite[Section 5]{MR2448593}), from which the following theorem follows by standard stable manifold theory.

\begin{thm}[{\cite[Theorem 4]{MR2448593}}]
\label{thm:hyperbolic_stability}

Any constant curvature metric is asymptotically stable under the XCF after suitable normalisation.
\end{thm}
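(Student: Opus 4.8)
The plan is to invoke the standard stable/center manifold theorem for quasilinear parabolic equations, applied to the DeTurck-normalised flow linearised around a fixed hyperbolic metric. First I would set up the normalised flow $\partial_t g = 2\adj\Ein(g) - 2K^2 g$ from the excerpt, which has $g_K$ as a genuine fixed point, and pass to the corresponding DeTurck flow by adding the Lie-derivative gauge term $\Lie_W g$ as in \eqref{eq:dtxcf}; this is uniformly parabolic near $g_K$ by the symbol computation underlying \Cref{thm:xcf_existence_uniqueness} (the ellipticity is uniform on a neighbourhood of $g_K$ in, say, $C^{2,\alpha}$). The DeTurck-normalised flow still fixes $g_K$, and recovering stability of the original flow from stability of the DeTurck flow is routine: a solution of DeTurck-XCF differs from a solution of normalised XCF only by the flow of the time-dependent vector field $W$, which converges since $W$ vanishes at $g_K$ to suitable order, and normalised XCF is equivalent to XCF up to scaling and diffeomorphism by \cite[Lemma 1]{MR2448593}.

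Next I would identify the linearisation $L$ of the DeTurck-normalised flow at $g_K$. The paper already records it (following \cite[Lemma 2]{MR2448593}) as
\[
\partial_t V = -K\Delta V - 2K^2 (\Tr_g V)\, g + 2K^2 V,
\]
an operator of the form $-K\Delta + (\text{zeroth order})$. The key spectral input — quoted as \cite[Section 5]{MR2448593} — is that $\operatorname{spec}(L) \subset (-\infty, -1]$; I would verify this by decomposing symmetric $2$-tensors on the hyperbolic manifold into the trace part, the divergence-free traceless part, and the remaining piece, using the known spectrum of the rough Laplacian (Lichnerowicz-type) on each summand together with the fact that $-K = |K| > 0$, so that $-K\Delta$ contributes a nonnegative piece and the zeroth-order terms are handled slot by slot. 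Crucially the kernel is \emph{trivial}: because $g_K$ is an isolated fixed point after breaking the diffeomorphism gauge (Mostow rigidity and the DeTurck gauge-fixing together rule out a nontrivial center manifold), there is a genuine spectral gap below $0$.

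With $\operatorname{spec}(L) \subset (-\infty,-1]$ established, the conclusion follows from the standard linearised-stability theorem for analytic (or quasilinear) semigroups on the appropriate interpolation space: if the linearisation at an equilibrium of a quasilinear parabolic evolution equation has spectrum in $\{\operatorname{Re}\lambda \le -\delta\}$ for some $\delta > 0$, then that equilibrium is exponentially asymptotically stable, with convergence in the chosen Hölder or Sobolev norm. One then transfers this back: the DeTurck flow converges exponentially to $g_K$, hence the normalised XCF converges to $g_K$ modulo diffeomorphism, hence the XCF itself converges after rescaling. I expect the main obstacle to be the spectral analysis — specifically, pinning down that $0$ is not in the spectrum, i.e.\ that the DeTurck gauge genuinely eliminates the diffeomorphism-induced kernel so that no center manifold analysis is needed; once the strict gap $\operatorname{spec}(L)\subset(-\infty,-1]$ is in hand, the rest is an application of textbook stable-manifold theory (e.g.\ Lunardi, or Da Prato–Lunardi), exactly as indicated by the phrase ``from which the following theorem follows by standard stable manifold theory'' in the excerpt.
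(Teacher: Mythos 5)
Your proposal follows essentially the same route as the paper: normalise the flow so that $g_K$ is a genuine fixed point (equivalent to XCF up to scaling and diffeomorphism by \cite[Lemma 1]{MR2448593}), pass to the DeTurck-gauged flow, use the linearisation $\partial_t V = -K\Delta V - 2K^2 (\Tr_g V) g + 2K^2 V$ with spectrum in $(-\infty,-1]$ from \cite{MR2448593}, and conclude by standard stable manifold / linearised stability theory for quasilinear parabolic equations. This is correct and is exactly the argument the paper sketches, with your extra remarks on verifying the spectral gap and transferring back through the DeTurck gauge being consistent with the cited reference.
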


\subsection{Harnack inequality And Solitons}
\label{subsec:xcf_harnack_solitons}

Differential Li-Yau-Hamilton Harnack inequalities have proved an indispensable tool in the study of curvature flows. For the XCF, \cite[p. 9]{MR2055396} remarked that it is hoped such an inequality holds. For integrable solutions of the XCF this is true by proving the analogous Harnack inequality for the corresponding solution of the embedded Gauss curvature flow. In general, the non-vanishing of the Devil Tensor \eqref{eq:devil} causes significant difficulties in obtaining a Harnack inequality. The evolution of the devil tensor is very complicated and it's not clear whether or not it is amenable to the  maximum principle.

\begin{thm}(\cite[Section 6]{BIS4})
\label{thm:harnack}
The following Li-Yau-Hamilton Harnack inequality holds for integrable solutions to the XCF,
\[
\partial_t \sqrt{\det\opEin} - \frac{1}{\sqrt{\det\opEin}} \abs{\nabla \sqrt{\det\opEin}}_{\Ein}^2 + \frac{3}{4t}\sqrt{\det\opEin} \geq 0.
\]
\end{thm}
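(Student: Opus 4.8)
The plan is to import the Gauss-curvature-flow picture and recognise the asserted estimate as the classical Li--Yau--Hamilton Harnack inequality for that flow. Because the solution is integrable, \Cref{thm:intg_embed} together with \Cref{lem:xcf_gcf} (cf. the proof of \Cref{thm:intg_const_curv}) realises $(\tilde M,\tilde g_t)$ as the family of induced metrics on a convex, spacelike, co-compact hypersurface $F_t\colon\tilde M\to\R^{3,1}$ evolving by the Gauss curvature flow $\partial_t F=K\nu$, where $K=\det\W$ and, by \Cref{lem:ein_W}, $K=\sqrt{\det\opEin}$. The same lemma gives $\opEin^{-1}=K^{-1}\W$, hence $\abs{\nabla f}_{\Ein}^2=K^{-1}A(\nabla f,\nabla f)$ for any function $f$. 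Taking $f=\sqrt{\det\opEin}=K$, the quantity in the statement becomes
\[
Z:=\partial_t K-\frac{1}{K^{2}}\,A(\nabla K,\nabla K)+\frac{3}{4t}\,K,
\]
which is exactly Hamilton's Harnack quantity for the Gauss curvature flow of a hypersurface of dimension $n=3$; the coefficient $\tfrac34=\tfrac{n}{n+1}$ is the one forced by the degree-$n$ homogeneity of $K$, and one checks using \Cref{eq:hyperbolic} that $Z\equiv0$ along the self-similar hyperbolic solution $\tilde g_t=\sqrt{4K_0^2t}\,\tilde g_0$, so the constant is sharp. It therefore suffices to prove $Z\ge0$ for all $t>0$.

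Next I would record the scalar parabolic equation obeyed by $K$ and set up the maximum principle. Integrability forces $\Dv\equiv0$ by \Cref{lem:cubicform_codazzi}, so \eqref{eq:dtdetEin} loses its Devil-tensor term; since $K=\sqrt{\det\opEin}$, an elementary manipulation (dividing by $2\sqrt{\det\opEin}$ and expanding $\Box(\det\opEin)$) rewrites it as a uniformly parabolic reaction--diffusion equation
\[
\partial_t K=\Box K+Q(\nabla K)-\mathrm H\,K,
\]
with $Q$ quadratic in $\nabla K$, $\mathrm H=\Tr_g\adj\Ein$, and $\Box=\Tr_{\Ein^\sharp}\nabla^2$ uniformly elliptic as long as $\Ein>0$, i.e. as long as $F_t$ stays convex. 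Equivalently one may pass to the polar dual, writing $F_t$ via its support function as a graph over the hyperboloid $\H^3$, so that the flow becomes a Monge--Amp\`ere-type scalar equation on $\H^3$ and $K$ a function of the Hessian of the support function. In either description co-compactness guarantees that spatial extrema are attained, so the parabolic maximum principle is available.

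Finally I would compute the evolution of $Z$. Differentiating $Z$ along the flow and commuting derivatives calls for the evolutions of $\nabla K$, $\nabla^2 K$, $A$ and $\mathrm H$ under the flow; after substituting the equation for $\partial_t K$ one expects to reach a differential inequality of the form $(\partial_t-\mathcal L)Z\ge(\text{manifestly non-negative})-\tfrac{c}{t}(\text{controlled})$, where $\mathcal L$ is the linearised elliptic operator and convexity ($\kappa_i>0$, equivalently $\lambda_i>0$) is what makes the quadratic error terms --- those built from $\nabla^2K$, $A$ and $\nabla\mathrm H$ --- assemble into a sum of squares. Since the $\tfrac{3}{4t}$ term forces $Z>0$ as $t\to0^+$, the maximum principle then propagates $Z\ge0$ to all later times. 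I expect this evolution computation and the attendant sign bookkeeping to be the main obstacle: the coefficient $\tfrac34$ is precisely what is needed for the unfavourable terms to be absorbable, and without local convexity there is no control on the quadratic errors. This is carried out in \cite[Section 6]{BIS4}; the flatness of the Minkowski ambient, via the Gauss-map/support-function duality, is what keeps the computation manageable.
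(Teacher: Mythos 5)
Your overall route --- use \Cref{thm:intg_embed} and \Cref{lem:xcf_gcf} to realise an integrable solution as the induced metrics of a Gauss curvature flow of a convex, spacelike, co-compact hypersurface in $\R^{3,1}$, identify the stated quantity with the Gauss-curvature-flow Harnack quantity with constant $\tfrac34=\tfrac{n}{n+1}$ for $n=3$, check equality on the self-similar expander, and defer the maximum-principle computation to \cite[Section 6]{BIS4} --- is exactly the reduction this paper intends; the paper offers no proof beyond that citation, so at the level of strategy you and the paper agree.

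There is, however, a concrete error in your translation of the gradient term, and with it your claim that your $Z$ ``is exactly Hamilton's Harnack quantity for the Gauss curvature flow''. In this paper the subscript $\Ein$ on norms of covectors means contraction with the \emph{raised} Einstein tensor $\Ein^{ij}=g^{ik}g^{jl}\Ein_{kl}$ (this is the convention behind $\Box=\Ein^{ij}\nabla^2_{ij}$, $\T^i=\Ein^{ij}\nabla_j\ln\det\opEin$, and $\abs{\xi}^2_{\Ein}$ in \Cref{lem:xcf_symbol}), so $\abs{\nabla K}^2_{\Ein}=\Ein^{ij}\nabla_iK\nabla_jK$ has eigenvalue weights $\lambda_i=\kappa_j\kappa_k$. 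Since $\W=K\,\opEin^{-1}$ by \Cref{lem:ein_W}, this yields
\[
\frac1K\abs{\nabla K}^2_{\Ein}=g\bigl(\W^{-1}\nabla K,\nabla K\bigr),
\]
the inverse-second-fundamental-form term, which is the Chow--Hamilton gradient term for the Gauss curvature flow (it is precisely the correction between $\partial_t K$ at a fixed material point and at a fixed Gauss map image, and it is the term handled in \cite[Section 6]{BIS4}). You instead contracted with $\Ein^{-1}$ and arrived at $K^{-2}A(\nabla K,\nabla K)$, whose eigenvalue weights are $\kappa_i/K^2$ rather than $1/\kappa_i$; these quadratic forms are not mutually comparable on a general convex hypersurface, so your $Z$ is a genuinely different quantity: it is neither the classical Gauss curvature flow Harnack quantity nor equivalent to the inequality stated in the theorem. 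The mismatch should have been visible from the fact that Hamilton/Chow-type traced Harnack estimates carry $\W^{-1}$, not a speed-weighted copy of $A$. Once the gradient term is corrected, the remainder of your outline (equality along $\tilde g_t=\sqrt{4K_0^2t}\,\tilde g_0$ fixing the constant, positivity of $Z$ as $t\to0^+$, co-compactness making the maximum principle available, and the evolution computation carried out in \cite[Section 6]{BIS4}) is consistent with the paper's intended argument.
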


Solitons are closely related to the Harnack inequality. They are fixed points of the flow modulo scaling and diffeomorphism. As such, they are the expected limits (up to rescaling) of the flow. Unlike other flows such as the Ricci Flow, solitons for the XCF are very rigid. In fact something stronger is true. Recall that a soliton is a solution of XCF such that \(g_t = \lambda(t) \varphi_t^{\ast} g_0\) where \(\lambda\) is a positive function and \(\varphi_t\) a one-parameter family of diffeomorphisms. More generally, a \emph{breather} is a solution such that \(g_{t_0} = \lambda \varphi^{\ast} g_0\) for some \(t_0 ,\lambda > 0\) and a diffeomorphism \(\varphi\).

\begin{thm}[{\cite{MR2302600}}]
The only breathers, hence also solitons to the XCF are constant curvature metrics.
\end{thm}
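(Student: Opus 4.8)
The plan is to combine the monotonicity of the functional $J$ from \Cref{thm:hyperbolicity} with the strict volume expansion \eqref{eq:dtvol}, after rescaling $J$ into a scale-invariant quantity so that the standard breather argument applies. Since a soliton $g_t=\lambda(t)\varphi_t^\ast g_0$ satisfies the breather relation $g_{t_0}=\lambda(t_0)\varphi_{t_0}^\ast g_0$ at any fixed $t_0>0$, it suffices to treat breathers.

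First I would record how $J$ scales. As $\Ric$ is scale invariant while $\Sc$ scales like $r^{-1}$ and $g$ like $r$, the Einstein tensor $\Ein=\Ric-\tfrac{\Sc}{2}g$ is scale invariant; hence the $(1,1)$-tensor $\opEin=g^{-1}\Ein$ scales like $r^{-1}$, $(\det\opEin)^{1/3}$ like $r^{-1}$, and $d\mu$ like $r^{3/2}$ in dimension three. Therefore $J(rg)=r^{1/2}J(g)$ and $\operatorname{Vol}_{rg}(M)=r^{3/2}\operatorname{Vol}_g(M)$, so
\[
\widehat J(g):=J(g)\operatorname{Vol}_g(M)^{-1/3}
\]
is invariant under scalings and, like $J$ and $\operatorname{Vol}$, under diffeomorphisms. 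Along the XCF the integrand of $J$ is pointwise nonnegative by the arithmetic--geometric mean inequality applied to the eigenvalues of $\opEin$ (positive by \Cref{prop:negative_perserved}), so $J\ge 0$; \Cref{thm:hyperbolicity} gives $\tfrac{d}{dt}J\le 0$; and \eqref{eq:dtvol} gives $\tfrac{d}{dt}\operatorname{Vol}_g(M)>0$, so $A(t):=\operatorname{Vol}_{g_t}(M)^{1/3}$ is strictly increasing. Then
\[
\frac{d}{dt}\widehat J = A^{-1}\frac{dJ}{dt} - A^{-2}J\frac{dA}{dt}
\]
is a sum of two nonpositive terms, hence $\widehat J$ is non-increasing along the flow.

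Finally I would run the breather argument. If $g_{t_0}=\lambda\varphi^\ast g_0$, then $\widehat J(g_{t_0})=\widehat J(g_0)$ by scale- and diffeomorphism-invariance, so monotonicity forces $\widehat J$ to be constant on $[0,t_0]$ and $\tfrac{d}{dt}\widehat J\equiv 0$ there. Since the two displayed terms are nonpositive and sum to zero, each vanishes identically; in particular $A^{-1}\,dJ/dt\equiv 0$, hence $dJ/dt\equiv 0$ on $[0,t_0]$, and by the equality case of \Cref{thm:hyperbolicity} each $g_t$ — in particular $g_0$ — has constant (necessarily negative) sectional curvature. Equivalently, the vanishing of the second term together with $dA/dt>0$ gives $J\equiv 0$, so the pointwise arithmetic--geometric inequality is an equality, forcing $\Ein=\mu g$ for a function $\mu$, and then $\dive_g\Ein=0$ forces $\mu$ constant. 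I expect the main obstacle to be nothing deeper than isolating the right scale-invariant functional $\widehat J$: once $J$ is normalised by volume the rest is the familiar Perelman-type breather argument, the only side condition being that \Cref{thm:hyperbolicity} and \eqref{eq:dtvol} are available on all of $[0,t_0]$, which is guaranteed by persistence of negative curvature (\Cref{prop:negative_perserved}).
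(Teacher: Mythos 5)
Your proposal is correct and takes essentially the same route as the paper: both arguments rest on the volume expansion \eqref{eq:dtvol}, the monotonicity and equality case of \(J\) from \Cref{thm:hyperbolicity}, and the scaling exponents \(\lambda^{3/2}\) and \(\lambda^{1/2}\). The paper simply phrases it as a direct contradiction (\(\lambda>1\) from volume growth versus \(\lambda<1\) if \(J\) strictly decreased), whereas you package the identical ingredients into the scale-invariant functional \(J\operatorname{Vol}^{-1/3}\) and then invoke the same equality case, so the substance of the argument is unchanged.
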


This follows by monotonicity and scaling: Since volume is increasing by equation \eqref{eq:dtvol} and scales as \(\lambda^{3/2}\), for a breather we must have \(\lambda > 1\). On the other hand, since \(J\) scales as \(\lambda^{1/2}\), if \(J\) strictly decreases, then we obtain the contradiction \(\lambda < 1\). Therefore, \(J\) is constant and hence \(g_t\) has constant sectional curvature by the equality case of \Cref{thm:hyperbolicity}.

The theorem further supports \Cref{conj:chow_hamilton} in that if a limiting metric exists it should be stationary for the flow (up to rescaling and reparametrisation) and hence a soliton. Thus the only expected limits have constant curvature. A Harnack inequality for general solutions would further support the conjecture since it may be used to obtain improved smoothing estimates for the flow.

\subsection{Monotonicity Of Einstein Volume}
\label{subsec:xcf_volume}

Since the Einstein tensor is a metric, it induces a volume form. By \cite[Proposition 9]{MR2055396}, the Einstein volume is monotone non-decreasing along the XCF. We may strengthen this result, characterising solutions to the XCF such that \(\frac{d}{dt} I(M_t) = 0\) as precisely the integrable solutions, a question posed in \cite{MR2055396}, page 9.

\begin{thm}
\label{thm:volume_monotonicity}
Under the XCF of negative sectional curvature, the \emph{Einstein Volume},
\[
I(M_t):=\int \sqrt{\det\opEin}\,d\mu
\]
is non-decreasing. Moreover, \(\frac{d}{dt} I(M_t)= 0\) if and only if \(\Ob\) is Codazzi if and only if the Riemannian universal cover \((\tilde{M}, \tilde{g})\) embeds isometrically into Minkowski space \(\R^{3,1}\) as a locally convex, co-compact, spacelike hypersurface.
\end{thm}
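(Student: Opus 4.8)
The plan is to differentiate $I(M_t)$ directly, combining the evolution equation \eqref{eq:dtdetEin} for $\det\opEin$ with the evolution $\partial_t\,d\mu=\mathrm{H}\,d\mu$ of the volume element (equivalently \eqref{eq:dtvol}). The first step is to convert \eqref{eq:dtdetEin} into a clean evolution equation for the Einstein density $\sqrt{\det\opEin}$ itself. Writing $u=\sqrt{\det\opEin}$, one has $\partial_t u=\tfrac{1}{2u}\partial_t\det\opEin$, while the chain rule gives $\Box u=\tfrac{1}{2u}\Box\det\opEin-\tfrac{1}{4u^{3}}\abs{\nabla\det\opEin}_{\Ein}^{2}$. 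Substituting \eqref{eq:dtdetEin} and simplifying, the terms involving $\abs{\nabla\det\opEin}_{\Ein}^{2}$ cancel exactly — this cancellation is precisely what singles out $\sqrt{\det\opEin}$, i.e. the Einstein volume density, among the powers of $\det\opEin$ — leaving
\[
\partial_t\sqrt{\det\opEin}=\Box\sqrt{\det\opEin}+\tfrac14\sqrt{\det\opEin}\,\abs{\Dv}_{V}^{2}-\mathrm{H}\sqrt{\det\opEin}.
\]

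Next I would differentiate $I(M_t)=\int\sqrt{\det\opEin}\,d\mu$, using $\partial_t\,d\mu=\mathrm{H}\,d\mu$; the two terms proportional to $\mathrm{H}$ cancel, and
\[
\frac{d}{dt}I(M_t)=\int\Box\sqrt{\det\opEin}\,d\mu+\frac14\int\sqrt{\det\opEin}\,\abs{\Dv}_{V}^{2}\,d\mu.
\]
The operator $\Box=\Ein^{ij}\nabla_i\nabla_j$ has no zeroth-order term, and the contracted second Bianchi identity gives $\nabla_i\Ein^{ij}=0$, so $\Box f=\dive_g(\Ein^{ij}\nabla_j f)$ is a pure divergence; since $M$ is closed, $\int_M\Box f\,d\mu=0$ for every $f$. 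Hence
\[
\frac{d}{dt}I(M_t)=\frac14\int\sqrt{\det\opEin}\,\abs{\Dv}_{V}^{2}\,d\mu\ \geq\ 0,
\]
where positivity of $\det\opEin$ on $[0,T)$ is guaranteed by \Cref{prop:negative_perserved}.

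It then remains to read off the equality case. Since $\det\opEin>0$ and the integrand is continuous, $\tfrac{d}{dt}I(M_t)=0$ at a given time if and only if $\Dv$ vanishes identically there; by \Cref{lem:cubicform_codazzi} this is equivalent to $\Ob$ being Codazzi, and by \Cref{thm:intg_embed} this is in turn equivalent to the Riemannian universal cover $(\tilde M,\tilde g)$ embedding isometrically into $\R^{3,1}$ as a locally convex, co-compact, spacelike hypersurface.

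I do not expect a serious obstacle. The only points needing care are the bookkeeping of the gradient terms in passing from \eqref{eq:dtdetEin} to the evolution of $\sqrt{\det\opEin}$, and the observation that $\Box$ is divergence-free with respect to $d\mu$; both are routine. As an alternative one could simply quote \cite[Proposition 9]{MR2055396} for the monotonicity and carry out only the computation identifying the equality term with $\tfrac14\int\sqrt{\det\opEin}\,\abs{\Dv}_{V}^{2}\,d\mu$.
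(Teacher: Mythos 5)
Your proposal is correct and follows essentially the same route as the paper: the paper quotes the evolution of $(\det\opEin)^{\eta}$ from \cite[Proposition 9]{MR2055396} and sets $\eta=\tfrac12$, whereas you re-derive that $\eta=\tfrac12$ case from \eqref{eq:dtdetEin} by the chain rule, arriving at the identical formula $\frac{d}{dt}I(M_t)=\frac14\int|\Dv|_V^2\sqrt{\det\opEin}\,d\mu$ after the same integration by parts using $\dive_g\Ein=0$ and $\partial_t d\mu=\mathrm{H}\,d\mu$. The equality case is handled exactly as in the paper, via \Cref{lem:cubicform_codazzi} and \Cref{thm:intg_embed}.
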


\begin{proof}
By \cite[Proposition 9]{MR2055396}, we have
\[
(\partial_t - \Box) (\det\opEin)^\eta = \left(\frac{\eta}{2}|\Dv^{ijk}|_V^2 + \frac{\eta}{2}(1-2\eta)|\T^i|^2_V - 2\eta \mathrm{H}\right)(\det\opEin)^\eta,
\]
where \(\Box\) is defined in \eqref{eq:box}. Integrating by parts gives the identity
\[
\frac{d}{dt}\int(\det\opEin)^\eta d\mu = \int \left(\frac{\eta}{2}|\Dv^{ijk}|_V^2 + \frac{\eta}{2}(1-2\eta)|\T^i|^2_V + (1-2\eta)\mathrm{H}\right)(\det\opEin)^\eta d\mu.
\]
In particular, for $\eta=\frac{1}{2}$ we obtain
\[
\frac{d}{dt}I(M_t) = \frac{1}{4}\int |\Dv^{ijk}|_V^2\sqrt{\det\opEin}d\mu
\]
and monotonicity follows. Now apply \Cref{lem:cubicform_codazzi} and \Cref{thm:intg_embed} to obtain the second part.
\end{proof}

\section*{Acknowledgments}

Parts of this work were written while JS enjoyed the hospitality of the Department of Mathematics at Columbia University in New York, a visit which is funded by the "Deutsche Forschungsgemeinschaft" (DFG, German research foundation) within the research scholarship "Quermassintegral preserving local curvature flows", grant number SCHE 1879/3-1. JS would like to thank the DFG, Columbia University and especially Prof. Simon Brendle for their support. PB was supported by the ARC within the research grant “Analysis of fully non-linear geometric problems and differential equations”, number DE180100110.

\printbibliography

\end{document}